\documentclass[12pt]{amsart}
\usepackage[active]{srcltx}
\usepackage{a4wide,amssymb}
\usepackage{graphicx,amssymb,ifthen,url}
\usepackage{algorithm,algorithmic,color,amsmath,epsf,cancel,mathdots}
\usepackage[arrow, matrix, curve]{xy}

\xyoption{all}

\newtheorem{theorem}{Theorem}[section]
\newtheorem{lemma}[theorem]{Lemma}
\newtheorem{proposition}[theorem]{Proposition}
\newtheorem{corollary}[theorem]{Corollary}
\theoremstyle{definition}
\newtheorem{definition}[theorem]{Definition}

\newtheorem{example}[theorem]{Example}

\theoremstyle{remark}
\newtheorem{remark}[theorem]{Remark}

\numberwithin{equation}{section}

\newcommand {\D}  {{\mathcal D}}
\newcommand {\E}  {{\mathcal E}}
\newcommand {\F}  {{\mathcal F}}

\newcommand {\M}  {{\mathcal M}}
\newcommand {\N}  {{\mathcal N}}
\newcommand {\PP}  {{\mathcal P}}
\newcommand {\R}  {{\mathcal R}}

\newcommand {\SSS}  {{\mathcal S}}

\newcommand {\V}  {{\mathcal V}}

\newcommand {\FF}  {{\mathbb F}}

\newcommand {\NN}  {{\mathbb N}}

\newcommand {\RR}  {{\mathbb R}}
\newcommand {\ZZ}  {{\mathbb Z}}

\newcommand{\abs}[1]{\lvert#1\rvert}

\newcommand{\fl}[1]{\left\lfloor #1 \right\rfloor}

\newcommand{\comment}[1]{{}}

\newcommand{\qmboxq}[1]{\quad\mbox{#1}\quad}

\newenvironment{romanlist}
  {%
   \setlength{\topsep}{0pt}%
   \vspace{-\parskip}%
   \begin{enumerate}%
     \setlength{\parsep}{0pt}
     \setlength{\parskip}{0pt}%
  }%
  {\end{enumerate}%
   \vspace{-\parskip}}

\def\eps{\varepsilon}

\def\oddots{\mathinner{\mkern1mu\raise\p@ \vbox{\kern7\p@\hbox{.}}\mkern2mu \raise4\p@\hbox{.}\mkern2mu\raise7\p@\hbox{.}\mkern1mu}}

\providecommand{\sVert}[1][-1]{ \ensuremath{\mathinner{
\ifthenelse{\equal{#1}{-1}}{ 
\rvert}{}
\ifthenelse{\equal{#1}{0}}{ 
\rvert}{}
\ifthenelse{\equal{#1}{1}}{ 
\bigr\rvert}{}
\ifthenelse{\equal{#1}{2}}{ 
\Bigr\rvert}{}
\ifthenelse{\equal{#1}{3}}{ 
\biggr\rvert}{}
\ifthenelse{\equal{#1}{4}}{ 
\Biggr\rvert}{}
}} 
}

\title[Digit Systems over commutative rings]{Digit systems over commutative rings}

\author[K.~Scheicher]{K.~Scheicher}
\address{K.~Scheicher, Institute of Mathematics, University of Natural Resources and Applied Life Sciences, Gregor-Mendel-Stra\ss{}e
33, 1180 Vienna, AUSTRIA} \email{klaus.scheicher@boku.ac.at}
\author[P.~Surer]{P.~Surer}
\address{P.~Surer, Chair of Mathematics and Statistics, University of Leoben, Franz-Josef-Stra\ss{}e 18, 8700 Leoben, AUSTRIA}
\email{me@palovsky.com}
\author[J.~M.~Thuswaldner]{J.~M.~Thuswaldner}
\address{J.~M.~Thuswaldner, Chair of Mathematics and Statistics, University of Leoben, Franz-Josef-Stra\ss{}e 18, 8700 Leoben, AUSTRIA}
\email{Joerg.Thuswaldner@unileoben.ac.at}
\author[C.~E.~van de Woestijne]{C.~E.~van de Woestijne}
\address{C.~E.~van de Woestijne, Institute of Mathematics B,
Technical University of Graz, Steyrergasse 30, 8010 Graz, AUSTRIA}
\email{c.vandewoestijne@tugraz.at}
\thanks{This research was supported by the Austrian Science Foundation (FWF),
projects S9606 and S9610, which are part of the national research network
FWF-S96 ``Analytic combinatorics and probabilistic number
theory''.}

\date{September 2, 2009}

\keywords{Canonical number system, shift radix system, digit systems}

\subjclass[2000]{11A63, 28A80, 52C22}

\begin{document}

\begin{abstract}
Let $\E$ be a commutative ring with identity and $P\in\E[x]$ be a
polynomial. In the present paper we consider digit representations
in the residue class ring $\E[x]/(P)$. In particular, we are
interested in the question whether each $A\in\E[x]/(P)$ can be
represented modulo $P$ in the form $e_0+e_1 X + \cdots + e_h X^h$,
where the $e_i\in\E[x]/(P)$ are taken from a fixed finite set of
digits. This general concept generalises both canonical number
systems and digit systems over finite fields. Due to the fact that
we do not assume that $0$ is an element of the digit set and that
$P$ need not be monic, several new phenomena occur in this context.
\end{abstract}

\maketitle

\begin{section}{Introduction}

In recent years, many different notions of number systems have
been invented and thoroughly studied (see {\it e.g.}
\cite{Allouche1997,BBLT2006} and the references therein). Several
of them, like canonical number systems and digit systems over
finite fields \cite{Petho:91,Scheicher-Thuswaldner:03a} represent
elements of a factor ring of the shape $\E[x]/(P)$, where $\E$ is
a commutative ring with identity and $P\in \E[x]$ is a polynomial.
The representations obtained in these number systems have the
shape $e_0+e_1 X+\cdots + e_h X^h$ where $X$ is the coset of $x$
modulo $P$, and the elements $e_i$ ($0\le i \le h$) are taken from
a finite set $\N \subset \E[x]/(P)$ of digits.

The aim of the present paper is to establish a common general
framework for all number systems of this kind. Indeed, we allow $\E$
to be an arbitrary commutative ring with identity. The only
requirement for the digit set $\N$ is that it has to be a system of
coset representatives of the ring $\E[x]/(P)$ modulo the basis $X$.
(Note that $\E[x]/(P,x)$ is isomorphic to $\E/(P(0))$; thus this
property is decided by looking at the constant coefficients of the
digits alone.) In particular, we allow digit sets $\N$ that do not
contain zero.

Although we are able to prove several theorems in this general
context, new and interesting phenomena and difficulties occur
throughout.

As a first point, treated in Section~\ref{sec1}, if zero is not
contained in the digit set, one has to be careful how to define
finite expansions; also the well known fact that each element of
$\E[x]/(P)$ has a finite representation if and only if the dynamical
system associated to the number system has only zero as a periodic
point, is no longer true. In the general case, there may well occur
a cycle of length greater than $1$ without the finite expansion
property being violated.

Moreover, as we do not have the concept of an expanding polynomial
in our general setting, one has to take care to adopt the right
definition of periodicity. Indeed, let $T$ be the dynamical system
associated to the number system. We can easily show that an
element with eventually periodic orbit admits a digit
representation which is eventually periodic. However, the converse
is only known when the defining polynomial $P$ is expanding.

Secondly, in canonical number systems as well as in digit systems
over finite fields, \emph{monic} defining polynomials $P$ have
been considered almost exclusively (although
\cite{Akiyama-Frougny-Sakarovitch:08}, \cite{Gilbert:81} and
\cite[Section 5.3.3]{Woestijne:PhD:06} do treat nonintegral bases
for number systems in $\ZZ$ from various viewpoints). In
Section~\ref{sec2}, we explore the case of non-monic polynomials.
Here the basis of the number system, taken as a root of the
defining polynomial, need no longer be an algebraic integer. While
$\E[x]/(P)$ is a finitely generated free $\E$ module if $P$ is
monic, neither property holds if $P$ is not monic. Interestingly,
we are able to exhibit a finitely generated (and in many cases
even free) module $\R_k \subset \E[x]/(P)$ with the following
property. If each element of $\R_k$ admits a finite expansion then
the same is true for all elements of $\E[x]/(P)$. This reduces the
problem from a set with complicated algebraic structure to an
easier framework.

In Section~\ref{sec3}, given the exact sequence
$$
  0\rightarrow \E[x]/(P_2) \xrightarrow{\cdot P_1} \E[x]/(P_1P_2) \rightarrow
  \E[x]/(P_1) \rightarrow 0,
$$
of $\E$-modules, with number systems on the outer components, we
construct a number system on the middle module and derive conditions
when all elements in this number system have a finite expansion, or
at least a periodic digit sequence.

In Section~\ref{sec4}, we extend the definitions of canonical number
systems and number systems over finite fields to possibly non-monic
defining polynomials. Interestingly, for canonical number systems,
this more general version is still covered by the theory of shift
radix systems (in the sense of
\cite{Akiyama-Borbeli-Brunotte-Pethoe-Thuswaldner:05}).

In the final Section~\ref{sec5}, we take the concept of a set of
witnesses (due to Brunotte~\cite{Brunotte:01} in the context of
canonical number systems) to our general setting. The idea is that
the finite expansion property of a number system can be decided by
looking only at the elements of a properly chosen subset of $\R$.
Whenever a number system possesses a \emph{finite} set of
witnesses, we can decide the finite expansion property by a finite
algorithm. We prove the existence of finite witness sets for a
large class of number systems.

Throughout the paper we illustrate our concepts with examples which
show the difficulties and new phenomena occurring here.
\end{section}

\begin{section}{Basic properties of digit systems} \label{sec1}

We start this section with a formal definition of a quite general
notion of number system. Fix a commutative ring $\E$ with identity.

\begin{definition} \label{DefBasic}
Let $d \geq 1$ be an integer and
\[P(x)=p_dx^d+\cdots+p_1x+p_0\]
a polynomial with coefficients in $\E$, such that $p_d$ and $p_0$
are not zero divisors of $\E$, and $\E/(p_0)$ is finite.
Furthermore, let $\R = \E[x]/(P)$, and let $\N \subset \R$ be a
system of coset representatives of $\R/(X)$. The {\it digit system
over $\E$ defined by $P$ and $\N$} is the triple $(\R,X,\N)$, where
$X$ is the image of $x$ under the canonical epimorphism $\E[x]
\rightarrow \E[x]/(P)$.

Define the maps
\[\begin{split}
  D_\N &: \R\rightarrow \N : D_\N(A) = e\text{, the unique $e\in\N$ with } A\equiv e\pmod{X} , \\
  T &: \R \rightarrow \R : T(A) = \frac{A-D_\N(A)}X.
\end{split}\]
We say that $(\R,X,\N)$ has the {\it periodic expansion property}
(PEP) if the sequence $(T^i(A))_{i\geq0}$ is eventually periodic for
each $A\in \R$. We call the sequence $(D_\N(T^i(A)))_{i \geq 0}$ the
{\it digit sequence of $A$ in the digit system $(\R,X,\N)$}.
\end{definition}
\begin{lemma}\label{pepperseq}
The PEP implies that each $A\in\R$ has an eventually periodic digit
sequence.
\end{lemma}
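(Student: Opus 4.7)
The plan is to observe that the conclusion is essentially a direct consequence of the fact that $D_\N$ is a well-defined function on $\R$, so that applying it termwise to an eventually periodic sequence in $\R$ produces an eventually periodic sequence in $\N$.

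First, I would unpack the hypothesis: by the PEP, for a given $A\in\R$ there exist integers $i_0\geq 0$ and $p\geq 1$ such that
\[
  T^{i+p}(A) = T^{i}(A) \qquad \text{for all } i\geq i_0.
\]
Second, I apply the map $D_\N\colon \R\to \N$ to both sides. Since $D_\N$ is a genuine function (the digit $e\in\N$ with $A\equiv e \pmod{X}$ is unique because $\N$ is a system of coset representatives of $\R/(X)$), this yields
\[
  D_\N(T^{i+p}(A)) = D_\N(T^{i}(A)) \qquad \text{for all } i\geq i_0,
\]
which is precisely the statement that the digit sequence $(D_\N(T^i(A)))_{i\geq 0}$ is eventually periodic with pre-period at most $i_0$ and period dividing $p$.

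There is no real obstacle here; the only thing worth flagging is what the lemma does \emph{not} claim. It is the forward direction only: eventual periodicity of the orbit pushes forward to eventual periodicity of the digit sequence via the function $D_\N$. The converse direction, recovering periodicity of the orbit from periodicity of the digit sequence, would require inverting the relation $T(A) = (A - D_\N(A))/X$, which is exactly the point where, as noted in the introduction, expandingness of $P$ becomes relevant; but that issue does not arise for the present implication.
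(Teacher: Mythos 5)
Your proof is correct and is exactly the argument the paper has in mind; the paper simply labels it ``Trivial'', and your write-up spells out the routine step of pushing eventual periodicity of the orbit $(T^i(A))_{i\ge 0}$ forward through the well-defined map $D_\N$. No issues.
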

\begin{proof}Trivial.
\end{proof}
It is not clear to us whether the converse also holds; it holds
trivially whenever the \emph{periodic set} of the number system
(cf.\ Definition~\ref{DefPeriodicSet}) is finite.

One notes that $\R/(X)\cong \E/(p_0)$, so that $|\N|=|\E/(p_0)|$.
The digit sequence of $A\in\R$ clearly exists and is unique, because
$\N$ is a system of representatives of $\R$ modulo $X$. If $\N$ were
larger, we would have non-uniqueness and nondeterminism in the
$X$-ary representation of $A$. The repeated application of the map
$T$ gives the backward division algorithm, as defined in
\cite{Gilbert:81} and many later papers.

\begin{lemma}\label{lemma1}
There exists $n\in\NN$ such that
\begin{equation}\label{rep1}
A = \sum_{i=0}^{n-1} D_\N(T^i(A)) X^i
\end{equation}
if and only if $T^n(A)=0$ for some $n \in \NN$.
\end{lemma}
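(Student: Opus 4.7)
The plan is to rely on the one-step identity $A = D_\N(A) + X\cdot T(A)$, which is just the definition of $T$ solved for $A$, and then iterate.

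First I would establish, by an easy induction on $n$, the ``division with remainder'' formula
\[
A \;=\; \sum_{i=0}^{n-1} D_\N(T^i(A))\, X^i \;+\; X^n\, T^n(A) \qquad (n\geq 0).
\]
The base case $n=0$ is trivial, and the induction step replaces $T^{n-1}(A)$ in the inner sum by $D_\N(T^{n-1}(A)) + X\, T^{n}(A)$. The ``if'' direction of the lemma is then immediate: if $T^n(A)=0$ for some $n$, the trailing term vanishes and we obtain \eqref{rep1}.

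For the converse, subtracting \eqref{rep1} from the identity above yields $X^n T^n(A)=0$ in $\R$. The key step, and the one point that is not completely formal, is to show that $X$ is not a zero divisor in $\R=\E[x]/(P)$; once this is done, $X^n$ is also a non-zero-divisor (product of non-zero-divisors), and therefore $T^n(A)=0$, as required.

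To see that $X$ is not a zero divisor, suppose $B\in\R$ is represented by $b(x)\in\E[x]$ and satisfies $XB=0$ in $\R$. Then $x\,b(x)=P(x)\,q(x)$ in $\E[x]$ for some $q\in\E[x]$. Comparing constant coefficients gives $0=p_0\,q(0)$, and since $p_0$ is by hypothesis not a zero divisor of $\E$, we conclude $q(0)=0$. Hence $q(x)=x\,q_1(x)$ for some $q_1\in\E[x]$, so $x\bigl(b(x)-P(x)q_1(x)\bigr)=0$ in $\E[x]$; since $x$ is obviously not a zero divisor in $\E[x]$, this forces $b(x)=P(x)q_1(x)$, i.e.\ $B=0$ in $\R$.

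The main (only) obstacle is thus the non-zero-divisor property of $X$, which is where the hypothesis on $p_0$ in Definition~\ref{DefBasic} enters crucially; the rest of the argument is a formal manipulation of the recursion defining $T$.
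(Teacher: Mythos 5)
Your proof is correct and takes essentially the same route as the paper's: iterate the one-step identity $A = D_\N(A) + X\,T(A)$ and let the sum telescope. The only difference is that you isolate and actually prove the one non-formal ingredient --- that $X$ is not a zero divisor in $\R$, which follows from the hypothesis that $p_0$ is not a zero divisor in $\E$ --- a fact the paper's proof (and indeed the well-definedness of $T$ itself) uses silently; making it explicit is a worthwhile addition.
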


\begin{proof}
Assume that $A$ has the representation~\eqref{rep1}.
We obviously
have $T(A) = \frac{A-D_\N(A)}X = \sum_{i=0}^{n-2} D_\N(T^{i+1}(A)) X^i$.
Continuing, by induction, we obtain $T^{n}(A)=0$. Conversely,
suppose there exists an $n \in \NN$ such that $T^n(A)=0$. Then it is easy to see that
$A =\sum_{i=0}^{n-1} D_\N(T^{i}(A)) X^i$.
\end{proof}

Lemma~\ref{lemma1} motivates the following definition.
\begin{definition}
Let $(\R,X,\N)$ be a digit system over $\E$. $(\R,X,\N)$ has the {\it finite expansion
property} (FEP) if for each element $A \in \R$ there exists an $n \in \NN$ with $T^n(A)=0$.
The representation \eqref{rep1} is called the finite $X$-ary expansion of $A$.
\end{definition}

\begin{example}\label{ex1}
Let $\E=\ZZ$, $P(x)=3x^2-2x+5$, and $\N=\{0,1,2,3,4\}$. We want to
calculate the digit  sequence of $-X^k$ for $k \geq 0$ (where
$X^0=1$) in the digit system $(\ZZ[x]/(P),X,\N)$. The difficulty in
the non-monic case is that we cannot represent $-X^k$ in
$\ZZ[x]/(P)$ as a linear combination of smaller powers of $X$. We
have $-1 \equiv 4 \pmod{X}$ and therefore
\[T(-1)=\frac{-1- D_\N(-1)}{X} = \frac{-1- 4}{X} = \frac{3X^2-2X}{X} = 3X-2.\]
Continuing in this way we obtain
\[
\begin{split}
T^2(-1) & = T(3X-2)=\frac{3X-2- D_\N(3X-2)}{X} = \frac{3X-2- 3}{X} = \frac{3X^2+X}{X} = 3X+1 \\
T^3(-1) & = T(3X+1)= 3  \\
T^4(-1) & = T(3)= 0.
\end{split}
\]
Thus we have $(D_\N(T^i(-1)))_{i \geq 0}=4,3,1,3,0,0,\ldots$ and obviously
\[(D_\N(T^i(-X^k)))_{i  \geq 0}=\underbrace{0,\ldots,0}_{k},4,3,1,3,0,0,\ldots.\]
We see that the element $-X^k$ has the finite $X$-ary expansion
\[-X^k=\sum_{j=0}^{k+3} D_\N(T^j(-X^k)) X^j = 4X^k+3X^{k+1}+X^{k+2}+3X^{k+3}.\]

Alternatively, if we take $\M=\{-2,-1,0,1,2\}$ as digits, we easily obtain
\[(D_\M(T^i(-X^k)))_{i \geq 0}=\underbrace{0,\ldots,0}_{k},-1,0,0,\ldots.\]
The finite $X$-ary expansion of $-X^k$ is just $-1X^k$.

We are going to deal with such generalisations of canonical number
systems  in Subsection~\ref{CNS}. There we also will see whether
$(\R,X,\N)$ and $(\R,X,\M)$, respectively, have the PEP or FEP.
\end{example}

\begin{lemma} \label{lemma2}
The finite expansion property implies the periodic expansion property.
\end{lemma}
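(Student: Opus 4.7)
The plan is to reduce the eventual periodicity of an arbitrary orbit under $T$ to the observation that, under FEP, the point $0\in\R$ is itself a periodic point of $T$.

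First I fix an arbitrary $A\in\R$ and invoke FEP to obtain an $n\in\NN$ with $T^n(A)=0$. From step $n$ onward the orbit of $A$ coincides with the forward orbit of $0$; more precisely, $T^{n+j}(A)=T^j(0)$ for every $j\ge 0$. Hence it suffices to prove that $(T^j(0))_{j\ge 0}$ is eventually periodic.

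Second, I apply FEP to the element $T(0)\in\R$: there exists $k\in\NN$ with $T^k(T(0))=0$, i.e., $T^{k+1}(0)=0$. Iterating $T$ on both sides yields $T^{k+1+j}(0)=T^{j}(0)$ for all $j\ge 0$, so the orbit of $0$ is purely periodic with period dividing the positive integer $k+1$. Combining the two steps, $(T^i(A))_{i\ge 0}$ is eventually periodic with pre-period at most $n$ and period dividing $k+1$, which is precisely PEP.

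The argument is entirely formal and there is no real obstacle; the only minor subtlety is that the period of the cycle containing $0$ must be at least $1$, which is why I apply FEP to $T(0)$ rather than directly to $0$ itself. No properties of $\E$, $P$ or $\N$ beyond those already assumed in Definition~\ref{DefBasic} together with FEP enter the proof.
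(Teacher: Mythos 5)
Your proof is correct and rests on the same key observation as the paper's: applying the FEP to $T(0)$ forces the orbit of $0$ to return to $0$, so every orbit, after reaching $0$ via the FEP, becomes periodic. The paper phrases this as a proof by contradiction (a non-periodic orbit through $0$ would deny $T(0)$ a finite expansion), while you give the direct version; the content is identical.
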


\begin{proof}
Suppose that there exists an $A$ having finite $X$-ary expansion and
no periodic digit sequence. Then there exists an $n \in \NN$ with
$T^n(A)=0$. Since $A$ is not periodic we have $T^k(A)\not=0$ for all
$k>n$. Thus the element $T^{n+1}(A)=T(0)\ne 0$ cannot have a finite
$X$-ary  expansion.
\end{proof}

\begin{definition} \label{DefZeroExpansion}
  A \emph{zero cycle} of a digit system $(\R,X,\N)$ is a finite sequence
  $(d_0,\ldots,d_\ell)$, with $d_i\in\N$ and $\ell\ge 0$, such that
  \begin{equation}\label{zeroexp}
    0=\sum_{i=0}^\ell d_iX^i.
  \end{equation}
  The \emph{zero period} of $(\R,X,\N)$ is the length of a shortest
  zero cycle, if such a one exists, and undefined otherwise.
\end{definition}

Note that the finite expansion \eqref{zeroexp} of $0$ given by a
zero cycle is different from the trivial expansion of $0$ by an
empty sum.
\begin{lemma}
The finite expansion property implies the existence of a zero cycle.
\end{lemma}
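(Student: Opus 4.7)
The plan is to produce a zero cycle by following the $T$-orbit of $0$ itself and reading off its digits. Iterating the basic identity $A = D_\N(A) + X\cdot T(A)$, which holds for every $A\in\R$ by the definitions of $D_\N$ and $T$, yields
$$A = \sum_{i=0}^{n-1} D_\N(T^i(A))\,X^i + X^n\, T^n(A) \qquad\text{for all } A\in\R,\ n\ge 0,$$
which is essentially the mechanism behind Lemma \ref{lemma1}. Consequently, any $n\ge 1$ with $T^n(0)=0$ will automatically furnish a zero cycle, namely the sequence $(D_\N(T^i(0)))_{i=0}^{n-1}$.

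The only subtlety is to secure such an $n$ with $n\ge 1$. Applying the FEP directly to $A=0$ is not enough, since $T^0(0)=0$ already holds trivially and corresponds only to the empty-sum expansion of $0$, which is explicitly excluded by the remark following Definition \ref{DefZeroExpansion}. The trick I would use is to apply the FEP instead to the element $T(0)\in\R$: by hypothesis there exists $m\ge 0$ with $T^m(T(0))=T^{m+1}(0)=0$, and then $n := m+1\ge 1$ does the job.

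Substituting $A=0$ and this choice of $n$ into the displayed identity, and using $T^n(0)=0$, gives
$$0 = \sum_{i=0}^{n-1} D_\N(T^i(0))\,X^i,$$
whose coefficients all lie in $\N$ by the definition of $D_\N$. This is a zero cycle of length $n\ge 1$ in the sense of Definition \ref{DefZeroExpansion}. I expect the only real obstacle to be the one just identified, namely avoiding the degenerate empty-sum expansion of $0$; shifting the base point of the FEP application from $0$ to $T(0)$ resolves it cleanly, and the rest of the argument is simply the iteration identity above.
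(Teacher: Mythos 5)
Your proof is correct and follows essentially the same route as the paper: both apply the FEP to $T(0)$ rather than to $0$ itself (precisely to avoid the degenerate empty-sum expansion) and then prepend the digit $D_\N(0)$, producing the identical zero cycle $(D_\N(T^i(0)))_i$. The only cosmetic difference is that the paper treats the case $0\in\N$ separately, whereas your iteration identity handles both cases uniformly.
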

\begin{proof}
If $0\in\N$, the assertion is trivial, because $(0)$ is a zero
cycle.  Otherwise, let $a=T(0)$, so that $aX + D_\N(0)=0$. We have
$a\ne 0$; by assumption, there is a finite expansion $a =
\sum_{i=0}^{\ell} d_iX^i$ $(d_i\in\N)$. But then
$$
  0 = \sum_{i=0}^\ell d_iX^{i+1} + D_\N(0).
$$
Thus $(D_\N(0),d_0,\ldots,d_{\ell})$ is a zero cycle.
\end{proof}

Note that zero cycles are uniquely
determined by their length; under the periodic expansion property, if
a zero cycle exists, it is a concatenation of copies of the shortest zero
cycle.

Every finite expansion of an element $A \in \R$ can be prolonged
indefinitely by appending the sum~\eqref{zeroexp} corresponding to
the zero cycle; this generalises padding with zeros in case $0 \in
\N$. This gives another (constructive) proof of Lemma~\ref{lemma2}.

\begin{definition} \label{DefPeriodicSet}
  The \emph{periodic set} $\PP$ of a digit system $(\R,X,\N)$ is
  the set of all elements $A$ of $\R$ with $T^n(A)=A$ for some $n \geq 1$.
\end{definition}

$\PP$ is thus the set of all elements of $(\R,X,\N)$ that are purely
periodic under the action of $T$. The map $T$ permutes $\PP$, and we
can consider the quotient $\PP/T$, which is the set of orbits in
$\PP$ under the action of $T$. Note that the orbits are finite by
definition. Clearly, $\R$ has the PEP if and only if for all
$A\in\R$ there exists some $n$ with $T^n(A)\in\PP$.

In general, it is not clear if $\PP$ is, for example, nonempty, a
singleton, or finite. However, in the special case where $\R$ can be
embedded in a finite-dimensional complex vector space, we can
consider the {\it expanding property} on the defining polynomial
$P$, which requires that all zeros have modulus strictly greater
than $1$.  This property at once implies the nonemptiness and
finiteness of $\PP$, as well as the periodic expansion property (cf.
Section~\ref{CNS}). In this case, $\PP$ is also called the {\it
attractor} of $(\R,X,\N)$.

The next lemma gives another criterium for the finiteness of $\PP$.
The essence of this result is well-known, cf.\ for example
\cite[Lemma 2.1]{Scheicher-Thuswaldner:03}. An implication is that
the FEP cannot hold whenever there exists some $A\neq 0\in \R$ with
$T(A)=A$.

\begin{lemma}\label{lemma3}
Assume that $(\R,X,\N)$ has the PEP. Then $(\R,X,\N)$ has the FEP if
and only if $0 \in \PP$ and $\abs{\,\PP/T\,}=1$.
\end{lemma}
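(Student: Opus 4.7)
The plan is to prove both implications separately, using as the main tool the fact that $T$ restricted to the periodic set $\PP$ is a permutation (immediate from the definition of $\PP$), so that orbits of $T$ on $\PP$ partition $\PP$ into finite cycles.

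For the forward implication, assume FEP. I first want to show $0\in\PP$. If $0\in\N$, then $D_\N(0)=0$ and $T(0)=0$, giving $0\in\PP$ with period $1$. Otherwise $T(0)\neq 0$, but $T(0)\in\R$ so by FEP there is an $m\ge 0$ with $T^m(T(0))=T^{m+1}(0)=0$; taking the smallest such $m$, we obtain $0\in\PP$. Next I want $\abs{\PP/T}=1$. Pick any $A\in\PP$; by FEP we have $T^n(A)=0$ for some $n\in\NN$. Since $T$ permutes $\PP$, every power $T^n$ sends the orbit of $A$ to the orbit of $T^n(A)=0$, so $A$ and $0$ lie in the same orbit. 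Hence $\PP$ consists of a single orbit.

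For the backward implication, assume PEP together with $0\in\PP$ and $\abs{\PP/T}=1$. Let $A\in\R$ be arbitrary. By PEP, the orbit $(T^i(A))_{i\ge 0}$ is eventually periodic, so there is some $n_0$ with $T^{n_0}(A)\in\PP$. Since $\PP$ contains only one $T$-orbit and $0\in\PP$, this unique orbit is exactly the orbit of $0$; in particular $T^{n_0}(A)$ lies on this orbit, so there exists $m\ge 0$ with $T^m(T^{n_0}(A))=0$. Setting $n=n_0+m$ yields $T^n(A)=0$, which is the FEP.

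The step that requires the most care is the forward direction when $0\notin\N$: one must not try to use $A=0$ directly in the FEP hypothesis (which would only trivially yield $n=0$), but rather apply it to $A=T(0)\in\R$ to obtain a genuine period containing $0$. After that, the argument for $\abs{\PP/T}=1$ and the converse both reduce to the elementary observation that $T|_\PP$ is a bijection whose orbits partition $\PP$.
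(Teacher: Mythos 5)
Your proof is correct and follows essentially the same route as the paper: both directions rest on the fact that $T$ permutes $\PP$ so that its orbits partition $\PP$ into finite cycles, with the FEP forcing every such cycle to pass through $0$, and conversely the PEP together with the single-orbit-through-$0$ condition forcing every trajectory to reach $0$. The only cosmetic difference is that you establish $0\in\PP$ explicitly by applying the FEP to $T(0)$ (with the case split on whether $0\in\N$), whereas the paper leaves this implicit and instead relies on the PEP to guarantee that $\PP$ is nonempty; both are valid.
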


\begin{proof}
Since $(\R,X,\N)$ has the PEP, it follows that the digit sequence of
each element of $\R$ ends up periodically, i.e. for every $A\in\R$
there exists $n\in\NN$ with $T^n(A)\in \PP$. Now if
$\abs{\,\PP/T\,}=1$ and $0 \in \PP$, it follows that for each $A \in
\R$ there exists an $n \in \NN$ with $T^n(A)=0$.

To show the converse, note that the orbits in $\PP/T$ are pairwise
disjoint. The requirements $0 \in \PP$ and $\abs{\,\PP/T\,}=1$ are
therefore equivalent to $0 \in \mathcal{O}$ for all orbits
$\mathcal{O} \in \PP/T$. Suppose that there exists an orbit
$\mathcal{O} \in \PP/T$ with $0 \not\in \mathcal{O}$. Then for each
element $A \in \mathcal{O}$ we have $T^n(A) \not=0$ for all $n \in
\NN$ and therefore $(\R,X,\N)$ cannot have the FEP.
\end{proof}

\begin{example}\label{ex2}
Let $\E=\FF_2[y]$, $P(x)=(y+1)x^2+yx+(y^2+1) \in \E[x]$ and
$\R=\E[x]/(P)$. Digit systems of this kind have been investigated in
\cite{Scheicher-Thuswaldner:03a}. A system of representatives of
$\R$ is, for example, $\N:=\{1,y,y+1,y^3+y\}$. Note that this set
does not include $0$. It can be easily verified that
$(y^3+y,1,1,1,y+1)$ is the zero cycle of $(\R,X,\N)$; thus we have
$$0=(y^3+y)+X+X^2+X^3+(y+1)X^4.$$
We immediately see that $\PP$ includes the orbit of $0$, which
consists of the elements
$$
0\rightarrow (y^2+y)X+y^2\rightarrow (y+1)X+y^2\rightarrow
(y+1)X+1\rightarrow y+1\rightarrow0.
$$
When $(\R,X,\N)$ has the PEP and $\PP$ consists only of these
elements, then $(\R,X,\N)$ has also the FEP. We come back to this
example in Section~\ref{FiniteFields}.
\end{example}

Several important properties of the digit system $(\R,X,\N)$ can be
derived from the constant and leading coefficients of the defining
polynomial $P$. First, we examine the pathological case when the
constant coefficient of $P$ is a unit.

\begin{lemma}\label{lemma4}
Let the polynomial $P$ be such that the constant coefficient $p_0$
is a unit of $\E$, and suppose that $(\R,X,\N)$ has the FEP. Then
$\R$ is finite.
\end{lemma}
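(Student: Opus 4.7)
The approach hinges on the observation that the hypothesis $p_0\in\E^\times$ collapses the digit set to a singleton. Indeed, $\E/(p_0)$ is then the zero ring, so $|\N|=|\E/(p_0)|=1$; writing $\N=\{c\}$, the digit map $D_\N$ is identically equal to $c$ on~$\R$.

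The next step is to apply the FEP via Lemma~\ref{lemma1}: every $A\in\R$ must then be expressible as
\[
A=\sum_{i=0}^{n-1}c\,X^{i}=c\,S_n,\qquad S_n:=1+X+\cdots+X^{n-1},
\]
for some $n\geq 0$ (with $S_0=0$). Thus $\R=\{c\,S_n:n\geq 0\}$, and it remains to show that this one-parameter family is finite.

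For the final reduction, I would invoke the zero-cycle lemma immediately preceding the statement. Since the only available digit is $c$, the zero cycle collapses to a relation $c\,S_\ell=0$ for some $\ell\geq 1$ (or $c=0$, in which case $\R=\{0\}$ trivially). The elementary telescoping identity $S_{n+\ell}=S_n+X^{n}S_\ell$ then gives $c\,S_{n+\ell}=c\,S_n+X^{n}(c\,S_\ell)=c\,S_n$, so the sequence $(c\,S_n)_{n\geq 0}$ is purely periodic with period dividing~$\ell$. In particular $|\R|\leq \ell<\infty$.

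The main conceptual step is the first one -- recognising that invertibility of $p_0$ forces a single-digit system; after that, everything is a short algebraic consequence, so there is no genuine obstacle. The only small subtlety is handling the degenerate choice $c=0$ separately from the use of the zero-cycle lemma.
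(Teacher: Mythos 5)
Your proof is correct, but it follows a genuinely different route from the paper's. The paper argues dynamically: since $p_0$ is a unit, $|\N|=|\E/(p_0)|=1$, so every element has the constant digit sequence $(d,d,\ldots)$; from this it concludes that the periodic set satisfies $\PP=\R$ and then invokes Lemma~\ref{lemma3} (together with the finiteness of the orbits in $\PP/T$) to get $|\R|<\infty$. You instead make the structure of $\R$ completely explicit: Lemma~\ref{lemma1} plus the FEP give $\R=\{cS_n : n\ge 0\}$ with $S_n=1+X+\cdots+X^{n-1}$, the zero-cycle lemma supplies a relation $cS_\ell=0$ with $\ell\ge 1$, and the telescoping identity $S_{n+\ell}=S_n+X^nS_\ell$ shows that the sequence $(cS_n)_n$ is periodic, whence $|\R|\le\ell$. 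Your version buys an explicit bound in terms of the zero period and, as a by-product, fills in a step the paper leaves terse (namely, why a single constant digit sequence forces $\PP=\R$ rather than merely eventual periodicity); the paper's version is shorter because it leans on the already-established machinery of Lemma~\ref{lemma3}. One cosmetic remark: the separate treatment of $c=0$ is not needed, since $(0)$ is then itself a zero cycle and the same telescoping argument runs with $\ell=1$; also, with the paper's indexing a zero cycle $(d_0,\ldots,d_{\ell'})$ yields $cS_{\ell'+1}=0$, so your $\ell$ is $\ell'+1$ --- harmless, but worth stating.
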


\begin{proof}
For $p_0$ a unit, the digit set $\N$ contains exactly one element
$d$, and all elements of $(\R,X,\N)$ have the same digit sequence
$(d,d,d,\ldots)$. Thus $\PP=\R$. By the Lemma~\ref{lemma3}, $\PP$ is
finite.
\end{proof}

The last lemma may seem curious. In order to illustrate it, we will
consider the example of base $1$ in $\ZZ$, that is, we take $\E=\ZZ$
and $P=x-1$, getting $\R\cong \E$. If the unique digit is chosen to
be $0$, then clearly only $0$ has a finite expansion. If the digit
is, say, $1$, then it is true that all positive integers can be
finitely expanded, in the way of (primitive) Roman numerals or tally
marks, but negative integers have no finite expansion. If we change
the base ring to $\E=\FF_p$ for some prime $p$, and take $P$
irreducible, the tally marks do cover all elements of $\R$. In
general, if $\R$ is a finite ring and $X$ some non-zero element of
$\R$, it is not trivial to decide if every element of a finite ring
$\R$ can be written in the form $\sum_{i=0}^\ell d X^i$, for some
fixed ``digit'' $d$. The problem is related to the theory of linear
congruential sequences; see also \cite[Lemma
4.13]{VanDeWoestijne:09} and \cite[Section 3.2.1]{Knuth:98}.

Because of the strange properties of digit systems with
$\abs{\N}=1$, we will assume that $p_0$ is not a unit for the rest
of the paper.

The other important coefficient of the defining polynomial $P$ is
the leading coefficient $p_d$, which is usually taken to be $1$,
because of the better algebraic properties of the quotient ring $\R$
in that case. Several monic cases, {\it i.e.}, when $p_d$ is a unit,
have already been investigated: $\E=\ZZ$ and
$\N=\{0,\ldots,\abs{p_0}-1\}$ give the well analysed canonical
number systems (see for instance~\cite{Petho:91}). If $\E$ is the
ring of polynomials over a finite field $\FF$ we obtain the digit
systems presented in~\cite{Scheicher-Thuswaldner:03a}. Both concepts
will be generalised in the next section.

\end{section}

\begin{section}{Digit systems in the non-monic case} \label{sec2}

Most of the investigations on digit systems in the literature have
been limited to the case where $P$ is monic. One of the reasons for
this constraint is that the structure of  $\R=\E[x]/(P)$ regarded as
an $\E$-module is much more complicated if $P$ is not monic. In
fact, if the leading coefficient $p_d$ of $P$ is a unit in $\E$,
then $\R$ is a free $\E$-module of finite rank $d$; for example, the
powers $1,X,\ldots,X^{d-1}$ of $X$ form a basis. If $p_d$ is not a
unit, then $\R$ is no longer free, and it is not even finitely
generated over $\E$. We do have the following standard
representation of its elements, relative to the choice of
representatives of $\E$ modulo $p_d$.

\begin{lemma} \label{LemTail}
Let $M\subset \E$ be a set of representatives of $\E/(p_d)$. For each $A \in
\R$ there exists a unique $A'\in \E[x]$ with $\deg A'<d$ and unique
$r_d,\ldots,r_k \in M$, with $k \in \NN$ minimal, such that
\[
   A \equiv A' + \sum_{i=d}^{k} r_i x^i \bmod{P}.
\]
\end{lemma}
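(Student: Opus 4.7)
The plan is to establish existence by a top-down reduction procedure and then prove uniqueness via a leading-coefficient argument that uses the hypothesis that $p_d$ is not a zero-divisor.

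For existence, I would start with any polynomial lift $\tilde A \in \E[x]$ of the class $A$, say of degree $n$. If $n < d$, nothing has to be done and we take $A' = \tilde A$ with an empty sum. Otherwise, let $a_n$ be the leading coefficient of $\tilde A$ and write $a_n = q p_d + r_n$ with $r_n \in M$. Replacing $\tilde A$ by $\tilde A - q x^{n-d} P$ gives a congruent polynomial modulo $P$, of degree at most $n$, whose coefficient at $x^n$ is now $r_n \in M$. Iterating the same step at positions $n-1, n-2, \ldots, d$, each subtraction of the appropriate multiple of $x^{i-d} P$ leaves the strictly higher coefficients untouched (since $q x^{i-d} P$ has degree exactly $i$), so the previously installed $M$-representatives survive. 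At termination, the coefficients at positions $d$ through $n$ lie in $M$ and the tail of degree less than $d$ becomes $A'$; taking $k$ to be the largest index in $[d,n]$ with $r_k \neq 0$ makes $k$ minimal.

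For uniqueness, I would suppose that $F_j = A_j' + \sum_{i=d}^{k_j} r_i^{(j)} x^i$ $(j=1,2)$ are two minimal representatives in $\E[x]$ of the same class $A$, and look at their difference $F_1 - F_2 = P Q$ with $Q \in \E[x]$. Assume for contradiction $Q \neq 0$, and set $N = d + \deg Q$. On one hand, the coefficient of $x^N$ in $P Q$ equals $p_d$ times the leading coefficient of $Q$, which is nonzero because $p_d$ is not a zero-divisor. On the other hand, that coefficient equals $r_N^{(1)} - r_N^{(2)}$, using the convention that $r_N^{(j)} = 0$ when $N > k_j$; so $r_N^{(1)} - r_N^{(2)}$ lies in $(p_d)$. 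Since $M$ contains exactly one representative per coset of $\E/(p_d)$, this forces $r_N^{(1)} = r_N^{(2)}$, producing the contradiction $p_d \cdot \mathrm{lead}(Q) = 0$. Hence $Q = 0$, so $F_1 = F_2$ as polynomials, and comparing coefficients together with the minimality of $k_j$ yields $A_1' = A_2'$, $k_1 = k_2$ and $r_i^{(1)} = r_i^{(2)}$ for all $i$.

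I expect the main obstacle to lie in the bookkeeping of the uniqueness step, specifically in the treatment of indices $N > k_j$ where no $r_N^{(j)}$ is formally supplied: the clean argument implicitly assumes $0 \in M$, so that absent coefficients can be read as the (unique) representative of the zero coset. If $0 \notin M$, one has to rephrase minimality so that the representation cannot be shortened by re-absorbing the representative of the zero coset, but this is a minor adjustment. The conceptual heart of the proof is the interaction of the two standing hypotheses of the lemma, namely that $p_d$ is a non-zero-divisor in $\E$ and that $M$ is a complete set of coset representatives for $(p_d)$, which jointly forbid any two distinct reduced forms.
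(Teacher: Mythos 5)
Your proposal is correct and follows essentially the same route as the paper: existence by reducing the coefficients at positions $n,n-1,\ldots,d$ modulo $p_d$ via subtraction of multiples of $x^{i-d}P$, and uniqueness by comparing leading coefficients of the difference, using that $p_d$ is not a zero divisor and that $M$ meets each coset of $(p_d)$ exactly once (the paper phrases this as a downward induction rather than a single contradiction at the top coefficient of $PQ$). The corner case you flag for $0\notin M$ dissolves once one notes that the minimality of $k$ forces $k_1=k_2$, so the index $N=d+\deg Q$ always lies in $[d,k]$ and both coefficients genuinely belong to $M$ --- which is exactly how the paper's proof implicitly handles it by writing both representations with the same $k$.
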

\begin{proof}
Let $A \in \R$ be represented by
$$f =\sum_{i=0}^k b_ix^i \in
\E[x]$$ with $k$ minimal. If $k \geq d$, there is a unique $r_k \in
M$ such that $b_k=r_k+q_k p_d$, for some $q_k\in\E$, and it follows
that
$$
f = q_k Px^{k-d} + r_k x^k + f' \equiv r_k x^k + f' \pmod{P}
$$
where $f'\in\E[x]$ has lower degree than $f$. Continuing by induction, we find
$$
  f = QP + \sum_{i=d}^k r_i x^i + \tilde{f},
$$
with $Q\in\E[x]$,  $r_i\in M$, and  $\tilde{f}\in\E[x]$ a polynomial
of degree less than $d$. In order to prove unicity, suppose that $A$
is also represented by $$g=\sum_{i=d}^k r_i' x^i + \tilde{g},$$
where $\tilde{g}\in\E[x]$ has degree less than $d$. Thus, the
difference $(f-QP)-g$ is a multiple of $P$.  We have $r_k'=r_k$,
because both are congruent modulo $p_d$, and both are in $M$.
Continuing by induction, we find that $\tilde{f}=\tilde{g}$, as
desired.
\end{proof}

The question whether a given digit system $(\R,X,\N)$ has periodic
representations or even finite expansions can already be decided by
looking at a properly chosen finitely generated $\E$-submodule of
$\R$.

\begin{theorem} \label{ThmBru1}
Let $(\R,X,\N)$ be a digit system with $\R=\E[x]/(P)$ and let
$k\geq\deg P$ be minimal such that all digits in $\N$ can be
represented by polynomials in $\E[x]$ of degree at most $k$. Let
$\R_k$ be the submodule of $\R$ generated by $X^i$ for
$i=0,\ldots,k-1$. Then $(\R,X,\N)$ has the FEP (PEP, resp.) if and
only if every element of $\R_k$ has a finite $X$-ary expansion
(periodic digit sequence, resp.).
\end{theorem}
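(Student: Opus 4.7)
The implication from the FEP/PEP of $\R$ to that of $\R_k$ is immediate, since $\R_k\subseteq\R$. The substantial direction is the converse, and the plan is to show that for every $A\in\R$ the orbit of $A$ under $T$ enters $\R_k$ after finitely many steps; once there, the hypothesis on $\R_k$ transfers the desired finite or eventually periodic behaviour back to $A$.

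To control the orbit, I would fix an arbitrary polynomial representative $A=\sum_{i=0}^m a_iX^i$ (with $a_i\in\E$) and prove that $T(A)$ admits a polynomial representative of degree at most $\max(k-1,m-1)$. The key algebraic input is the identity $p_0\equiv -X(p_1+p_2X+\cdots+p_dX^{d-1})\pmod P$, which expresses the constant term as $X$ times a polynomial of degree $d-1$. Writing the digit as $D_\N(A)=\sum_{i=0}^k c_iX^i$ (possible by the minimality of $k$) and noting that $a_0-c_0=b\,p_0$ for some $b\in\E$ (the congruence $A\equiv D_\N(A)\pmod{X}$ in $\R$ reduces to $a_0\equiv c_0\pmod{p_0}$ in $\E$), the subtraction and division by $X$ can be carried out explicitly to yield
\[
T(A)=-b\sum_{j=1}^{d}p_jX^{j-1}+\sum_{i=1}^{k}(a_i-c_i)X^{i-1}+\sum_{i=k+1}^{m}a_iX^{i-1}.
\]
The first two sums have degree at most $k-1$ (using $k\geq d$), while the last has degree $m-1$ and is empty when $m\leq k$, giving the claimed bound. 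The division by $X$ is legitimate because $X$ is a non-zero divisor in $\R$: from $Xf\equiv 0\pmod P$, inspection of constant coefficients forces divisibility by $x$ in the corresponding multiple of $P$, using that $p_0$ is not a zero divisor in $\E$, and the argument cascades.

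Iterating the degree estimate, a polynomial representative of $T^n(A)$ has degree at most $\max(k-1,m-n)$, so after at most $\max(0,m-k+1)$ applications of $T$ the element lies in $\R_k$. Applying the FEP hypothesis on $\R_k$ to this element then yields some $n'$ with $T^{n+n'}(A)=0$; under the PEP hypothesis, the orbit from $T^n(A)$ onward is eventually periodic, and hence so is the orbit of $A$.

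The principal obstacle is the degree-reduction computation itself, since the non-monicity of $P$ denies us a canonical notion of ``degree'' on elements of $\R$. Tracking a fixed polynomial representative throughout the iteration, rather than an intrinsic invariant, sidesteps this, and the identity rewriting $p_0$ as $X$ times a polynomial of degree $d-1$ is precisely what causes the degree to drop (whenever we are above the digit-degree threshold $k$) while keeping the ``head'' of $T(A)$ inside $\R_k$.
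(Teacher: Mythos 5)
Your proposal is correct and follows essentially the same route as the paper's proof: both arguments show that subtracting a degree-$\le k$ representative of the digit together with a \emph{constant} multiple of $P$ (your identity $p_0\equiv -X(p_1+\cdots+p_dX^{d-1})\pmod P$ is exactly this choice) kills the constant term while adding a disturbance of degree at most $k$, so that dividing by $X$ strictly decreases the degree of a representative until the orbit lands in $\R_k$, where the hypothesis applies. Your explicit formula for $T(A)$ and the remark that $X$ is a non-zero-divisor are just more detailed renderings of the same computation.
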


\begin{proof}
Let $A\in\R$ be represented by $f\in\E[x]$. We will investigate the
action of $T$ on $A$. Let $c\in\E[x]$ be a representative of minimal
degree of $D_\N(A)$. Thus, $f-c$ is divisible by $X$ modulo $P$.
Hence, for some $Q\in\E[x]$, the polynomial $f - c - QP$ has zero
constant coefficient, and in fact we may take $Q$ to be constant, so
that the degree of the disturbing term $c+QP$ is bounded by $k$.
Thus, if $f$ has degree at most $k-1$, then the same holds for the
representative $\frac{f-c-QP}X$ of $T(A)$, so $T(A)\in\R_k$; whereas
if $\deg f \geq k$, then $\frac{f-c-QP}X$ has smaller degree than
$f$, so that $T^i(A)$ will be in $\R_k$ if $i$ is large enough.
Thus, it is enough to consider the finite expansion (the periodic
expansion, resp.) of the elements of $\R_k$.
\end{proof}

\begin{example}
Consider $P(x)=2x+3\in\ZZ[x]$, which corresponds to numeration with
basis $-3/2$. If all digits have degree $0$ or $1$, then we can take
$k=1$ in the theorem, and it suffices to consider $R_1$, the set of
polynomials having degree $0$ representatives. Of course, $R_1$ is
isomorphic to $\ZZ$. If some digit can only be represented by
polynomials of degree at least $m>1$, then we have $k\geq m$. Such
classes exist: as soon as $P$ is not monic, the monomials $x^m$, for
$m\ge 0$, cannot be further reduced modulo $P$.
\end{example}

\begin{remark}
Note that, in general, when $k\leq\deg P$, the module $R_k$ is free,
because we assume that the leading coefficient $p_d$ of $P$ is not a
zero divisor. This case occurs for $k=\deg P$ when all digits have
low-degree representatives. If $p_d$ is a unit in $\R$, then for any
$k$ the module $R_k$ is contained in the free $\E$-module generated
by $\frac{X^i}{p_d^k}$ for $i=0,\ldots,k$; this follows from the
normal form given in Lemma~\ref{LemTail}.
\end{remark}

Establishing the FEP for a given digit system is in general a
difficult problem. Therefore, if we have a critical subset as in the
theorem, we will be interested in making it as small as possible, so
that we need to check as few elements as possible for
representability. Below, we will show that a special independent set
in $\R$ called the \emph{Brunotte basis} (cf. \cite{Brunotte:01})
generates a rather small critical submodule of $\R$, and furthermore
brings the dynamic mapping $T$ into an especially simple form. The
Brunotte basis has already been used in the case where $P$ is monic,
or, in other words, in the CNS case (cf. \cite{Brunotte:01,
Scheicher-Thuswaldner:03}), where it is a basis of $\R$ over $\E$.
We will now generalise this concept to the non-monic case.

Because the effect of choosing this basis on the backward division
algorithm is only visible when the digit set $\N$ is chosen to be a
subset of $\E$, we will assume for the remainder of this section
that each digit has a representative contained in $\E$ and we will
identify the digits with these representatives.

\begin{definition}\label{wis}
Let $w_0=p_d$ and $w_k = Xw_{k-1} + p_{d-k}$ for $k=1,\ldots,d-1$;
then $(w_0,\ldots,w_{d-1})$ is called the {\it Brunotte basis} of
$\E[x]$ modulo $P$. The $\E$-submodule $\Lambda_P$ of $\R$ generated
by the $w_i$ will be called the \emph{Brunotte module} of $P$.
\end{definition}

We now state some easy properties of the Brunotte basis. The proofs
are straightforward and will be omitted.

\begin{lemma}
Let $(w_0,\ldots,w_{d-1})$ be the Brunotte basis of $\E[x]$ modulo
$P$.
  \begin{romanlist}
     \item
       For $i=0,\ldots,d-1$, the basis element $w_i$ is exactly the integral
       (polynomial) part of $P/X^{d-i}$. In particular, we have
       $Xw_{d-1}+p_0=P$.
     \item
       The coordinate matrix of the $w_i$, with respect to the basis
       $1,X,\ldots,X^{d-1}$, is upper triangular, and all diagonal elements are
       equal to $p_d$.
  \end{romanlist}
\end{lemma}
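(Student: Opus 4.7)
My plan is to prove both items together by establishing an explicit closed form for each $w_k$ via induction on $k$, and then reading off (i) and (ii) directly from this formula.

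First I would unwind the recursion to guess the pattern. Starting from $w_0=p_d$ and $w_k=Xw_{k-1}+p_{d-k}$, the natural candidate is
\[
  w_k=\sum_{j=0}^{k}p_{d-k+j}X^{j}=p_{d-k}+p_{d-k+1}X+\cdots+p_{d}X^{k}
  \qquad (0\le k\le d-1).
\]
The base case $k=0$ is immediate, and the inductive step is a one-line computation: multiplying the closed form for $w_{k-1}$ by $X$ and adding $p_{d-k}$ reproduces the closed form for $w_k$. This is the technical heart, and it is routine; I do not anticipate any obstacle, since the recursion is deliberately arranged so that each new term simply fills in the next-lower coefficient of $P$.

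For (i), the closed form immediately identifies $w_k$ with the polynomial obtained from $P=\sum_{i=0}^{d}p_{i}x^{i}$ by dividing by $X^{d-k}$ and discarding all negative powers, i.e.\ with the integral part of $P/X^{d-k}$. The special case $k=d-1$ gives $w_{d-1}=p_{1}+p_{2}X+\cdots+p_{d}X^{d-1}$, so $Xw_{d-1}+p_{0}=\sum_{i=0}^{d}p_{i}X^{i}=P$, which is the stated identity; modulo $P$ this also yields the useful consequence $Xw_{d-1}\equiv -p_{0}$.

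For (ii), the same formula shows that $w_{i}$, expressed in the generating set $1,X,\ldots,X^{d-1}$, has coordinate vector $(p_{d-i},p_{d-i+1},\ldots,p_{d},0,\ldots,0)^{T}$. Writing these as the columns of a $d\times d$ matrix therefore gives an upper triangular matrix whose diagonal entries are all equal to $p_{d}$, as claimed. No additional argument is needed, since the closed form from the induction already delivers exactly the required coordinate information.
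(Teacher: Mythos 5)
Your proof is correct, and it is exactly the routine argument the paper has in mind: the lemma's proof is explicitly omitted as ``straightforward,'' and your closed form $w_k=\sum_{j=0}^{k}p_{d-k+j}X^{j}$ is the same identity the paper itself later records (as $w_k=\sum_{i=0}^k p_{d-i}X^{k-i}$) in the proof of Theorem~\ref{ThmSRSLink}. The induction, the identification with the integral part of $P/X^{d-i}$, and the reading-off of the upper triangular coordinate matrix with diagonal entries $p_d$ are all accurate.
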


We have the inclusions
$$
  \Lambda_P \; \subseteq \; \bigoplus_{i=0}^{d-1} \E X^i = \R_{d} \subseteq
  \R.
$$
The quotient $\R_d/\Lambda_P$ is isomorphic to $(\E/(p_d))^d$ and
thus, $\R_d=\Lambda_P$ if and only if $p_d$ is a unit. Under the
same condition, $\R_d=\R$.

Using the Brunotte basis, we modify the representation of elements
in $\R$ given in Lemma~\ref{LemTail}.
\begin{lemma} \label{LemBruTail}
Let $M\subset \E$ be a set of representatives of $\E/(p_d)$. For
each $A \in \R$ there exist unique $q_0,\ldots,q_{d-1}\in\E$ and
unique $r_0,\ldots,r_k \in M$, with $k \in \NN$ minimal, such that
\[ A =   \sum_{i=0}^{d-1} q_iw_i+\sum_{i=0}^{k} r_i X^i.\]
\end{lemma}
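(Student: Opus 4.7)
The approach is to deduce Lemma~\ref{LemBruTail} from Lemma~\ref{LemTail} via a coordinate decomposition of the free $\E$-module $\R_d=\bigoplus_{i=0}^{d-1}\E X^i$ relative to the Brunotte submodule $\Lambda_P$.

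Given $A\in\R$, I first apply Lemma~\ref{LemTail} to obtain a unique polynomial $A'\in\E[x]$ of degree less than $d$ and unique $r_d,\ldots,r_k\in M$ (with $k$ minimal) such that $A = A' + \sum_{i=d}^{k} r_i X^i$ in $\R$. Since $\deg A' < d$, the polynomial $A'$ represents an element of $\R_d$, and the remaining task is to show that every element of $\R_d$ admits a unique representation of the form $\sum_{i=0}^{d-1} q_i w_i + \sum_{i=0}^{d-1} r_i X^i$ with $q_i\in\E$ and $r_i\in M$. Combining the two stages then yields the required expression for $A$; uniqueness at both stages gives uniqueness of all the coefficients, and minimality of $k$ is inherited from Lemma~\ref{LemTail}.

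To handle the decomposition in $\R_d$, I use the fact that the coordinate matrix of $(w_0,\ldots,w_{d-1})$ with respect to the basis $(1,X,\ldots,X^{d-1})$ is upper triangular with the non-zero-divisor $p_d$ along every diagonal entry. Writing $A'$ in coordinates as $(a_0,\ldots,a_{d-1})$, for existence I perform a top-down reduction: choose $q_{d-1}\in\E$ with $a_{d-1}-q_{d-1}p_d\in M$ and subtract $q_{d-1}w_{d-1}$; then reduce the $(d-2)$-nd coordinate analogously using $w_{d-2}$; and so on. Since $w_i$ has zero coefficient in $X^j$ for $j>i$, each step leaves the already-reduced higher coordinates intact. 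After $d$ steps the residual coordinate vector lies in $M^d$ and supplies $r_0,\ldots,r_{d-1}$, while the accumulated multipliers give $q_0,\ldots,q_{d-1}$.

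For uniqueness of this decomposition I use the same descending induction. If two such decompositions of $A'$ agree, then comparing the coefficient of $X^{d-1}$ yields $q_{d-1}p_d + r_{d-1} = q'_{d-1}p_d + r'_{d-1}$; because $M$ is a complete set of representatives of $\E/(p_d)$ this forces $r_{d-1}=r'_{d-1}$, and because $p_d$ is not a zero divisor it then forces $q_{d-1}=q'_{d-1}$. Descending through the coordinates yields equality of all remaining coefficients. The main obstacle is precisely this reduction-and-uniqueness argument for $\R_d$; it is the place where the non-zero-divisor hypothesis on $p_d$ together with $M$ being a set of representatives of $\E/(p_d)$ enters in an essential way, and it is the non-monic analogue of the trivial decomposition $\R_d=\Lambda_P=\R$ available in the monic case.
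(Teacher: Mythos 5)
Your proposal is correct and follows essentially the same route as the paper: first invoke Lemma~\ref{LemTail} to reduce to an $A'$ of degree less than $d$, then use the upper-triangular coordinate matrix of the Brunotte basis (with the non-zero-divisor $p_d$ on the diagonal) to reduce the coefficients of $A'$ from $w_{d-1}$ down to $w_0$. You supply the uniqueness verification in more detail than the paper does, but the underlying argument is the same.
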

\begin{proof}
Let $A\in\R$, and reduce it to the form $A'+\sum_{i=d}^k r_iX^i$, as
in Lemma \ref{LemTail}, with $\deg A'<d$. Now observe that with
respect to the usual power basis $1,X,\ldots,X^{d-1}$, the elements
$w_i$ of the Brunotte basis have leading coefficient $p_d$. We use
the $w_i$ instead of $P$ to reduce the coefficients of $f$ further,
from $w_{d-1}$ down to $w_0$, and we find
$$
  A' = \sum_{i=0}^{d-1} q_iw_i+r_{d-1} X^{d-1} + \cdots + r_1X + r_0 ,
$$
as desired.
\end{proof}

\begin{definition}
Let $M\subset \E$ a set of representatives of $\E/(p_d)$.
For an element $A \in \R$ the representation
\[
A= \sum_{i=0}^{d-1} q_iw_i+r
\]
from the above lemma is called the {\it standard representation} of
$A$ with respect to $M$. We say that $r = \sum_{i=0}^k r_iX^i$ is
the {\it residue polynomial} of $A$.
\end{definition}

The main result of this section is the following theorem, which
shows that we may take $\Lambda_P$ instead of $\R_d$ in Theorem
\ref{ThmBru1}.

\begin{theorem}\label{kern}
Let $(\R,X,\N)$ be a digit system with $\N\subseteq\E$. Then
$(\R,X,\N)$ has the FEP (PEP, resp.) if and only if each $A \in
\Lambda_P$ has a finite $X$-ary expansion (periodic digit sequence,
resp.). On $\Lambda_P$, the dynamic mapping $T$ takes the form
\begin{equation} \label{EqTBrunotte}
  T : \E^d \rightarrow \E^d,\quad (a_0,\ldots,a_{d-1}) \mapsto
      \left(a_1,\ldots,a_{d-1},
            -\frac{\sum_{i=0}^{d-1} a_ip_{d-i} - e_0}{p_0}\right),
\end{equation}
where a general element $\sum_{i=0}^{d-1} a_iw_i\in\Lambda_P$ is
represented by its coordinate sequence with respect to the basis
$(w_i)$, and $e_0\in\N$ is the unique digit that ensures
divisibility by $p_0$.
\end{theorem}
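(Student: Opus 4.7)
The theorem packages three assertions: $\Lambda_P$ is invariant under $T$; the action of $T$ on Brunotte coordinates is given explicitly by~\eqref{EqTBrunotte}; and FEP (resp.\ PEP) on $\R$ is equivalent to the same property restricted to $\Lambda_P$. My plan is first to handle the invariance and the explicit formula by a direct calculation, then to verify that every element of $\R$ reaches $\Lambda_P$ after finitely many applications of $T$; the equivalences then drop out immediately.

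For the invariance, take $A = \sum_{i=0}^{d-1} a_i w_i \in \Lambda_P$. Two identities drive the calculation. From the recursion $w_i = X w_{i-1} + p_{d-i}$ we read off $w_i - p_{d-i} = X w_{i-1}$ for $i \geq 1$ and $w_0 = p_d$, so that, because $A(0) = \sum a_i p_{d-i}$, we get $A - A(0) = X \sum_{j=0}^{d-2} a_{j+1} w_j$. Evaluating the recursion at $i=d$ yields $P = X w_{d-1} + p_0$, i.e.\ $p_0 \equiv -X w_{d-1} \pmod P$. With $e_0 = D_\N(A)$ and $c = (A(0) - e_0)/p_0$ we then have
\[
  A - e_0 = (A - A(0)) + c p_0 \equiv X\Bigl(\sum_{j=0}^{d-2} a_{j+1} w_j - c w_{d-1}\Bigr) \pmod P,
\]
so that $T(A) = \sum_{j=0}^{d-2} a_{j+1} w_j + (-c) w_{d-1} \in \Lambda_P$ with Brunotte coordinates exactly as in~\eqref{EqTBrunotte}.

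The $(\Rightarrow)$ direction of the FEP/PEP equivalence is trivial since $\Lambda_P \subseteq \R$. For $(\Leftarrow)$, I would take the standard representation $A = \sum q_i w_i + \sum_{i=0}^{k} r_i X^i$, with $r_i \in M$, provided by Lemma~\ref{LemBruTail} and redo the calculation above, now using $A(0) = \sum q_i p_{d-i} + r_0$ together with $\sum_{i=0}^{k} r_i X^i - r_0 = X \sum_{i=0}^{k-1} r_{i+1} X^i$. The outcome is
\[
  T(A) = \sum_{j=0}^{d-2} q_{j+1} w_j - c w_{d-1} + \sum_{i=0}^{k-1} r_{i+1} X^i.
\]
Because all shifted residue coefficients $r_{i+1}$ still lie in $M$, this is itself a standard representation, with Brunotte part in $\Lambda_P$ and residue polynomial of degree $k-1$. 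Iterating, the residue loses exactly one degree per step, so that after at most $k+1$ applications of $T$ it vanishes and $T^{k+1}(A) \in \Lambda_P$. Combined with the hypothesis that every element of $\Lambda_P$ has finite (resp.\ eventually periodic) digit sequence, this yields the same property for $A$.

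The main subtlety I expect is in the second part: showing that the displayed decomposition of $T(A)$ is actually its standard representation, which is what makes the residue descend cleanly by one degree instead of leaking into the Brunotte part. This rests precisely on the two facts used above---the shifted residue coefficients $r_{i+1}$ are already in $M$, and the awkward term $c p_0$ arising from the $X$-division is absorbed entirely into the Brunotte component via $p_0 \equiv -X w_{d-1} \pmod P$. Once these points are verified, the inductive descent of the residue and the final equivalences are straightforward.
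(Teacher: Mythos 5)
Your proposal is correct and follows essentially the same route as the paper: the same identities $w_i - p_{d-i} = Xw_{i-1}$, $w_0 = p_d$, and $p_0 \equiv -Xw_{d-1} \pmod{P}$ drive the computation of $T$ on the standard representation of Lemma~\ref{LemBruTail}, your $c$ is the paper's $q$ from~\eqref{qstern}, and the descent of the residue polynomial by one degree per application of $T$ gives $T^{k+1}(A)\in\Lambda_P$ exactly as in the paper. The only difference is organizational (you treat the pure $\Lambda_P$ case first and then the general standard representation, while the paper does the general case at once and specializes to $r=r_0=0$ for the formula), which changes nothing of substance.
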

\begin{proof}
Let $A\in\R$, let $M\subset \E$ be a set of representatives of $\E/(p_d)$, and
let
\[
  A=\sum_{i=0}^{d-1} a_i w_i+ r
\]
with $r=r_k X^k+\cdots+r_1X+r_0$, $r_i \in M$, be the standard
representation of $A$. We will investigate the action of $T$ on $A$.
Let  $e_0\in\E$ represent the digit $D_\N(A)$.  Taking
\begin{equation}\label{qstern}
q=\frac{\sum_{i=0}^{d-1}a_i p_{d-i}+r_0-e_0}{p_0},
\end{equation}
we can write
\[
  A=e_0 +q p_0 + \sum_{i=0}^{d-1} a_i (w_i-p_{d-i})+ X\frac{r-r_0}X
  \in
  \E[X].
\]
Observe that $w_0-p_d=0$, and $w_i-p_{d-i}=Xw_{i-1}$ for
$i=1,\,\ldots,\,d-1$; furthermore, we have $q p_0 \equiv
-qXw_{d-1}\pmod{P}$. Set $a_d=-q$. Then the standard representation
of $T(A)$ is
\begin{equation} \label{EqTA}
  T(A) = \frac{A-e_0}X = \sum_{i=0}^{d-1} a_{i+1}w_i + \frac{r-r_0}X.
\end{equation}
In other words: after one application of $T$ the degree of the residue
polynomial $r$ decreases by one and the first $k$ coefficients do not change.
Hence, after $k+1$ applications, we have
\[
  T^{k+1}(A) \in \Lambda_P.
\]
The first assertion of the theorem follows immediately. The second
assertion follows from \eqref{EqTA} by taking $r=r_0=0$ and taking
$a_d=-q$ from (\ref{qstern}).
\end{proof}

We end the section with the special case where the base ring $\E$ is
Euclidean. Here we obtain a necessary condition for the finite expansion
property that is analogous to the usual expanding property of $P$.

\begin{theorem}
Let $\E$ be Euclidean with value function $g:\E \mapsto \RR^+ \cup
\{0,-\infty\}$ where $g(0)=-\infty$, and let $(\R,X,\N)$ be a digit
system satisfying $\N\subset\E$ and $g(e)<g(p_0)$ for all $e \in
\N$. If $g(p_d) \geq g(p_0)$, then no element of $\Lambda_P$  but
$0$ has a finite $X$-ary expansion.
\end{theorem}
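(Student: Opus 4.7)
The plan is to derive a contradiction by inspecting the terminal step of a hypothetical finite expansion, using the explicit coordinate form of $T$ on $\Lambda_P$ supplied by Theorem~\ref{kern}. Because the Brunotte basis $(w_0,\ldots,w_{d-1})$ has a coordinate matrix with respect to $1,X,\ldots,X^{d-1}$ that is upper triangular with the non-zero-divisor $p_d$ on the diagonal, it is $\E$-linearly independent, and I may identify $\Lambda_P$ with $\E^d$ via coordinates. Any starting vector $(a_0,\ldots,a_{d-1})$ then extends to an infinite sequence $(a_k)_{k\ge0}$ governed by the recurrence packaged in \eqref{EqTBrunotte}, which after clearing denominators reads
\[
   a_k\, p_d + a_{k+1}\, p_{d-1} + \cdots + a_{k+d}\, p_0 \;=\; e_k,
\]
where $e_k = D_\N(T^k(A)) \in \N$.

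Next, I would assume for contradiction that some $A \in \Lambda_P \setminus \{0\}$ admits a finite expansion, and let $n \ge 1$ be minimal with $T^n(A) = 0$. The linear independence of the Brunotte basis makes $T^n(A)=0$ equivalent to the vanishing of the coordinate block $(a_n,\ldots,a_{n+d-1})$, while the minimality of $n$ forces $a_{n-1} \ne 0$. Substituting $a_n=a_{n+1}=\cdots=a_{n+d-1}=0$ into the recurrence at index $k=n-1$ collapses it to the single monomial identity $a_{n-1}\, p_d = e_{n-1}$.

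The finishing step is to apply the value function $g$. Using the standard Euclidean inequality $g(bc) \ge g(b)$ whenever $c \ne 0$, together with the hypothesis $g(p_d) \ge g(p_0)$, I would obtain
\[
   g(e_{n-1}) \;=\; g(a_{n-1}\, p_d) \;\ge\; g(p_d) \;\ge\; g(p_0),
\]
in direct contradiction with the digit bound $g(e_{n-1}) < g(p_0)$. I expect the only real obstacle to be bookkeeping: making sure the index conventions are aligned so that exactly one term, namely $a_{n-1}p_d$, survives in the terminal recurrence. Once this is in place, the Euclidean step is essentially a one-line verification.
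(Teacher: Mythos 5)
Your proof is correct, and it ends in exactly the same contradiction as the paper's: the highest nonzero digit of the putative expansion is a nonzero multiple of $p_d$, whence $g$ of that digit is at least $g(p_d)\ge g(p_0)$, violating the digit bound. The two arguments differ only in how this key identity is obtained. The paper argues statically: since the $w_k$ and $P$ all have leading coefficient $p_d$, any polynomial $\sum_{i=0}^h e_i x^i$ representing a nonzero element of $\Lambda_P$ has leading coefficient $e_h=qp_d$ with $q\ne 0$, and the contradiction follows at once. You argue dynamically, clearing denominators in \eqref{EqTBrunotte} to get the linear recurrence $a_kp_d+a_{k+1}p_{d-1}+\cdots+a_{k+d}p_0=e_k$ and reading off $e_{n-1}=a_{n-1}p_d$ at the terminal step; this is the same fact seen from the other side, and it costs you a little extra bookkeeping (linear independence of the Brunotte basis, $T$-invariance of $\Lambda_P$, the index alignment), all of which is already supplied by Theorem~\ref{kern}, so nothing is missing. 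A small advantage of your version is that it makes fully explicit why the top digit is nonzero ($a_{n-1}\ne 0$ by minimality of $n$ and $p_d$ is not a zero divisor), which the paper leaves implicit. One omission worth noting: the paper opens by observing that the hypotheses force $0\in\N$ (the digit representing the class of $0$ must be $0$ itself, since any other multiple of $p_0$ has $g$-value at least $g(p_0)$); this is not needed for your contradiction, but it is what guarantees that $0$ itself does admit a finite expansion, so that the statement ``no element but $0$'' is not vacuous.
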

\begin{proof}
First note that the assumption on $\N$ implies $0 \in \N$. Let
$\pi:\E[x] \rightarrow \R$ be the canonical epimorphism and $A \in
\E[x]$. Because the leading coefficient of $w_k$ is $p_d$ for each
$k \in \{0,\ldots,d-1\}$, it is easy to see that $\pi(A) \in
\Lambda_P$ implies that the leading coefficient of $A$ is a multiple
of $p_d$. Now suppose that there is a $B \in \Lambda_P$, $B \not=0$
with finite $X$-ary expansion
\[
  B=\sum_{i=0}^h e_iX^i,\quad e_i \in \N, e_h \not=0.
\]
By assumption we have that
$g(e)<g(p_0)$ for $e \in \N$ and therefore $g(e_h)<g(p_0) \leq g(p_d)$. As
observed above we also must have that $e_h=qp_d$ for some nonzero $q \in \E$.
But $\E$ is Euclidean, so $q,p_d \not=0$ implies $g(e_h)=g(qp_d)\geq g(p_d)$,
which is a contradiction.
\end{proof}
\begin{corollary}\label{1char}
With the above assumptions on $\E$ and $\N$, $g(p_d) < g(p_0)$ is necessary for $(\R,X,\N)$ to have the finite expansion property.
\end{corollary}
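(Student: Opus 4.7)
The plan is to derive the corollary as a direct contrapositive of the preceding theorem, combined with Theorem~\ref{kern}. Suppose $g(p_d)\ge g(p_0)$; I want to show the FEP fails. By Theorem~\ref{kern}, the FEP for $(\R,X,\N)$ is equivalent to the finite $X$-ary expansion of every element of the Brunotte module $\Lambda_P$. By the preceding theorem, under the hypothesis $g(p_d)\ge g(p_0)$, no nonzero element of $\Lambda_P$ can have a finite $X$-ary expansion. So it suffices to exhibit one nonzero element of $\Lambda_P$.

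The obvious candidate is $w_0=p_d$ itself. Since the standing assumption at the start of Definition~\ref{DefBasic} is that $p_d$ is not a zero divisor of $\E$ (and in particular $p_d\neq0$), the image of $w_0$ in $\R$ is nonzero: indeed if $p_d\equiv0\pmod P$ then $p_d=PQ$ for some $Q\in\E[x]$, forcing $p_d\cdot\text{(leading coefficient of }Q)=0$ by comparing leading coefficients, contradicting that $p_d$ is not a zero divisor. Hence $w_0\in\Lambda_P\setminus\{0\}$, so the FEP must fail.

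The main (and only) minor obstacle is checking that $\Lambda_P$ genuinely contains nonzero elements, which as above is immediate from the non-zero-divisor hypothesis on $p_d$. Everything else is assembling Theorem~\ref{kern} with the previous theorem via contrapositive, which requires no further calculation.
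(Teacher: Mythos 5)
Your proof is correct and matches the paper's intent: the corollary is stated without proof precisely because it is the contrapositive of the preceding theorem (one only needs the trivial direction that the FEP forces finite expansions on all of $\Lambda_P\subseteq\R$, so even the full strength of Theorem~\ref{kern} is not required). Your extra check that $\Lambda_P$ contains a nonzero element, via $w_0=p_d$ and the non-zero-divisor hypothesis, is a sensible and valid way to close the one small gap the paper leaves implicit.
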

\begin{example}
We retrieve the following known result for a linear polynomial
$P(x)=p_1x+p_0$ with $p_1\ne0$: if $\E=\ZZ$ and $|e|<|p_0|$ holds
for all $e\in\N$, then Corollary~\ref{1char} tells us that
$\abs{p_0} > \abs{p_1}$ is necessary for $(\ZZ[x]/(P),X,\N)$ to have
finite expansions. Of course, $\abs{p_0}
> \abs{p_1}$ is equivalent to the polynomial $P$ being expanding.
\end{example}

\end{section}

\begin{section}{Products of digit systems} \label{sec3}

Consider the exact sequence
$$
  0\rightarrow \E[x]/(P_2) \xrightarrow{\cdot P_1} \E[x]/(P_1P_2) \rightarrow
  \E[x]/(P_1) \rightarrow 0,
$$
of $\E$-modules, and suppose we have defined number systems on the
outer components. Below, we construct a naturally defined number
system on the middle module and derive conditions when all elements
in this number system have a finite expansion, or at least a
periodic digit sequence.

\begin{theorem}\label{multgcns}
Let
$$
P_1(x)=p_0+p_1x+\cdots+p_m x^m,\quad P_2(x)=p'_0+p'_1x+\cdots+p'_{n}
x^{n},
$$
and for $i\in\{1,2\}$, let $\R_i=\E[x]/(P_i)$ and $\N_i$ be systems
of representatives of $\R_i/(X)$ with $\N_i\subset\E$. Let $
\mathcal{M}=\{d+eP_1 :d\in\mathcal{N}_1,e\in\mathcal{N}_2\}. $ Then
the following assertions hold:
\begin{itemize}
\item $(\E[x]/(P_1P_2),X,\mathcal{M})$ is a digit system.
\item if $0\in\N_1$ and $(\R_1,X,\N_1)$ and
$(\R_2,X,\N_2)$ have the FEP, then $(\E[x]/(P_1 P_2),X,\mathcal{M})$
has the FEP.
\item if $0\in\N_1$, $(\R_1,X,\N_1)$ has the FEP, and
$(\R_2,X,\N_2)$ has the PEP, then $(\E[x]/(P_1 P_2),X,\mathcal{M})$
has the PEP.
\end{itemize}
\end{theorem}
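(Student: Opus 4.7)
The plan is to handle the three assertions in sequence, reducing the dynamics on $\R:=\E[x]/(P_1P_2)$ to those on $\R_1$ and $\R_2$ via the exact sequence displayed just before the theorem. For the first assertion I establish that $\M$ is a complete system of coset representatives of $\R/(X)\cong\E/(p_0p'_0)$. Since the leading coefficient of $P_1$ is not a zero divisor, $P_1$ is not a zero divisor in $\E[x]$ and the exact sequence $0\to\E/(p'_0)\xrightarrow{\cdot p_0}\E/(p_0p'_0)\to\E/(p_0)\to 0$ gives $|\E/(p_0p'_0)|=|\N_1|\cdot|\N_2|=|\M|$. It therefore suffices to check that $(d,e)\mapsto d+eP_1\bmod X$ is injective: comparing constant coefficients, $(d-d')+(e-e')p_0\in p_0p'_0\E$; reducing modulo $p_0$ forces $d=d'$, and cancelling the non-zero-divisor $p_0$ forces $e=e'$.

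For the dynamical parts the key observation is that the natural projection $\pi\colon\R\to\R_1$ intertwines the two dynamical systems. Given $A\in\R$ and writing $D_\M(A)=d+eP_1$, the relation $\pi(P_1)=0$ yields $d=D_{\N_1}(\pi(A))$, and applying $\pi$ to $T(A)=(A-d-eP_1)/X$ gives $\pi(T(A))=T_{\R_1}(\pi(A))$, hence $\pi(T^n(A))=T_{\R_1}^n(\pi(A))$ for every $n$. Using the FEP of $(\R_1,X,\N_1)$ I obtain $N$ with $T_{\R_1}^N(\pi(A))=0$, so $T^N(A)\in\ker\pi=P_1\R$. Because $P_1$ is not a zero divisor, multiplication by $P_1$ embeds $\R_2$ into $\R$ with image $P_1\R$, and I write $T^N(A)=P_1B$ for a uniquely determined $B\in\R_2$.

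The main technical step is to show that under this embedding $T$ on $P_1\R$ corresponds to $T_{\R_2}$ on $\R_2$, i.e.\ $T(P_1B)=P_1\,T_{\R_2}(B)$ for every $B\in\R_2$. For this I exploit the hypothesis $0\in\N_1$: since $\pi(P_1B)=0$ and $D_{\N_1}(0)=0$, the first digit component in $D_\M(P_1B)$ is $0$; comparing constant coefficients modulo $p_0p'_0$ and cancelling the non-zero-divisor $p_0$ then forces the second component to equal $D_{\N_2}(B)$; picking the representative of $B$ whose constant coefficient equals $D_{\N_2}(B)$ and performing the division by $X$ in $\E[x]$ yields the desired identity in $\R$. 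Keeping the various modular reductions and lifts straight, and making repeated use of the fact that $p_0$ and $P_1$ are not zero divisors, is the principal obstacle; everything else is bookkeeping.

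With the intertwinement in hand, the remaining conclusions follow immediately. For the second bullet, the FEP of $(\R_2,X,\N_2)$ supplies $N'$ with $T_{\R_2}^{N'}(B)=0$, giving $T^{N+N'}(A)=P_1\cdot 0=0$. For the third, the PEP of $(\R_2,X,\N_2)$ makes $(T_{\R_2}^i(B))_{i\ge 0}$ eventually periodic, whence $(T^{N+i}(A))_{i\ge 0}=(P_1\,T_{\R_2}^i(B))_{i\ge 0}$ is eventually periodic, and appending the finite prefix yields the PEP for $(\R,X,\M)$.
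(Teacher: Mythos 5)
Your proof is correct, and it follows the same underlying strategy as the paper's: reduce along the exact sequence, first using the FEP of $(\R_1,X,\N_1)$ to drive the orbit into $\ker\pi=P_1\R$, then using $0\in\N_1$ to keep it there and transfer the remaining dynamics to $(\R_2,X,\N_2)$. The difference is one of packaging. The paper writes every iterate explicitly as $A^{(j)}+P_1B^{(j)}$ and derives coefficient recurrences for $a_i^{(j)}$, $b_i^{(j)}$, $k^{(j)}$, $\ell^{(j)}$, observing that the $a$-recurrence is exactly the backward division algorithm in $\R_1$ and that, once $k^{(j)}=0$, the $b$-recurrence becomes the backward division algorithm in $\R_2$. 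You instead isolate two clean semiconjugacy identities, $\pi\circ T=T_{\R_1}\circ\pi$ and $T(P_1B)=P_1\,T_{\R_2}(B)$ on $\ker\pi$, the latter resting on $0\in\N_1$ forcing the $\N_1$-component of the digit to vanish there --- which is precisely the point where the paper also invokes $0\in\N_1$. Your version is arguably cleaner, makes explicit where the non-zero-divisor hypotheses on $p_0$, $p_m$ and on $X$ are used (cancellation in constant coefficients, injectivity of $\cdot P_1$, uniqueness of division by $X$), and actually proves the first bullet via the cardinality count $\abs{\R/(X)}=\abs{\N_1}\cdot\abs{\N_2}$, which the paper dismisses as trivial; the paper's recurrences, in exchange, yield a concrete algorithm for computing the digits of the product system. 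Both arguments are complete and prove the same three assertions.
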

\begin{proof} We only prove the second assertion, as the first is
trivial, and the third follows analogously to the second.

Consider an element
$$
A=a_0+a_1 x+\cdots+a_{N}x^{N}\in\mathcal{E}[x].
$$
Let $d_0\in\mathcal{N}_1$, $e_0\in\mathcal{N}_2$,
$k,\ell\in\mathcal{E}$ such that
$$
a_0=d_0+k p_0\qmboxq{and} k=e_0+\ell p'_0.
$$
Let $A^{(0)}=A$ and $B^{(0)}=0$. Since
$$
\begin{array}{rll}
  p_0 & \equiv-p_1 X-\cdots-p_m X^m+P_1  &\pmod{P_1 P_2}\quad\mbox{and}\\
P_1 p'_0 & \equiv(-p'_1 X-\cdots-p'_n X^n)P_1 &\pmod{P_1 P_2},
\end{array}
$$
we obtain that
\begin{align*}
A &=  A^{(0)}+P_1 B^{(0)}\\
&\equiv  d_0+
(a_1-k p_1)X+\cdots+(a_m-k p_m)X^{m}+a_{m+1}X^{m+1}+\cdots+a_N X^N+\\
&\quad  +k P_1\\
&\equiv  d_0+
(a_1-k p_1)X+\cdots+(a_m-k p_m)X^{m}+a_{m+1}X^{m+1}+\cdots+a_N X^N+\\
&\quad  +e_0 P_1+(-\ell p'_1 X-\cdots-\ell p'_n X^n)P_1\\
&\equiv  d_0+e_0 P_1+X (A^{(1)}+P_1 B^{(1)})\pmod{P_1 P_2}
\end{align*}
with $d_0+e_0 P_1 \in\mathcal{M}$. Iterating this process, we get a
recurrence for the coefficients of
$$
A^{(j)}=\sum_{i\geq0}a_i^{(j)}X^i\quad\qmboxq{and}\quad
B^{(j)}=\sum_{i\geq0}b_i^{(j)}X^i.
$$
Starting with
$$
a_i^{(0)}= \left\{
\begin{array}{rcl}
a_i   &\mbox{for}&  i\leq N, \\
0     &\mbox{for}&  i>N,
\end{array}
\right. \quad\quad b_i^{(0)}=0\qmboxq{for}i\geq0
$$
we obtain for $j\geq0$,
$$
\renewcommand{\arraystretch}{2.0}
\begin{array}{rcl}
d_j&\equiv&a_{0}^{(j)},\\
e_j&\equiv&b_{0}^{(j)}+k^{(j)},
\end{array}
\quad
\begin{array}{rcl}
k^{(j)}&=&(a_0^{(j)}-d_j)/{p_0},\\
\ell^{(j)}&=&(b_0^{(j)}+k^{(j)}-e_j)/{p'_0},
\end{array}
\quad
\begin{array}{rcl}
a_i^{(j+1)}&=&a_{i+1}^{(j)}-k^{(j)} p_{i+1},\\
b_i^{(j+1)}&=&b_{i+1}^{(j)}-\ell^{(j)} p'_{i+1},
\end{array}
$$
with $d_j\in\mathcal{N}_1$, $e_j\in\mathcal{N}_2$. Then
$$
A^{(j)}+P_1 B^{(j)} =d_j+e_j P_1+ X ( A^{(j+1)}+P_1 B^{(j+1)} ).
$$
Note that the recurrence for $k^{(j)}$, $a_i^{(j)}$ and $d_j$ is
just backward division algorithm for $A$, considered as an element
of $(\R_1,X,\N_1)$. Since $(\R_1,X,\mathcal{N}_1)$ has the FEP,
there is an index $j_0$ such that $k^{(j)}=0$ for all $j\geq j_0$.
Then, for $j\geq j_0$, the recurrence for the $b^{(j)}_i$ is no
longer disturbed by the $k^{(j)}$. Since $(\R_2,X,\mathcal{N}_2)$
has the FEP , there is an $j_1\geq j_0$ such that $\ell^{(j)}=0$ for
all $j\geq j_1$. Thus, we obtain a finite expansion
$$
A\equiv\sum_{j\geq0}(d_j+e_j P_1)X^j.
$$
\end{proof}

It is possible (but tedious) to extend this theorem to the case
where the $\N_i$ are no longer assumed to be subsets of $\E$.
Assuming this has been done, the following generalisation of
Theorem~\ref{multgcns} follows by induction on the number of
factors.

\begin{corollary}
For $i=1,\ldots,k$, let $(\E[x]/(P_i),X,\mathcal{N}_i)$ be digit
systems. Let
$$
\mathcal{M}= \{d_1+d_2 P_{1}+ d_3 P_1P_2 + \cdots+d_kP_{1}\cdots
P_{k-1}\}\qmboxq{with} d_i\in\mathcal{N}_{i}.
$$
Then $(\E[x]/(P_1\cdots P_k),X,\mathcal{M})$ is a digit system. If
$(\E[x]/(P_i),X,\N_i)$ has the FEP for all $i$ and $0\in\N_i$ for
$1\le i\le k-1$, then $(\E[x]/(P_1\cdots P_k),X,\mathcal{M})$ also
has the FEP.
\end{corollary}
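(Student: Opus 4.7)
The plan is to prove the corollary by induction on the number $k$ of factors. The case $k=1$ is trivial, and the case $k=2$ is precisely Theorem~\ref{multgcns}.

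For the inductive step, assuming the result for $k-1$ factors, I would first apply the induction hypothesis to $P_2,\ldots,P_k$ with digit sets $\N_2,\ldots,\N_k$. The required condition $0\in\N_i$ for $2\le i\le k-1$ is inherited from the hypothesis of the corollary, so the induction produces a digit system $(\E[x]/(P_2\cdots P_k),X,\mathcal{M}')$ with the FEP, where
\[
  \mathcal{M}' = \{d_2+d_3P_2+d_4P_2P_3+\cdots+d_kP_2\cdots P_{k-1} : d_i\in\N_i\}.
\]
Next, I would apply Theorem~\ref{multgcns} to the pair $(P_1,\,P_2\cdots P_k)$ with digit sets $\N_1$ and $\mathcal{M}'$. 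The assumption $0\in\N_1$ is given, and both factors have the FEP (by hypothesis and by the previous step, respectively), so the theorem produces a digit system on $\E[x]/(P_1\cdots P_k)$ with the FEP, whose digit set is $\{d+eP_1 : d\in\N_1,\ e\in\mathcal{M}'\}$. Expanding this by distributivity yields exactly the prescribed
\[
  \mathcal{M} = \{d_1+d_2P_1+d_3P_1P_2+\cdots+d_kP_1\cdots P_{k-1} : d_i\in\N_i\},
\]
closing the induction.

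The main obstacle is precisely the point flagged in the remark preceding the corollary: Theorem~\ref{multgcns} is stated under the assumption that the digit sets lie inside $\E$, while the digit set $\mathcal{M}'$ produced by the induction is only a subset of $\E[x]/(P_2\cdots P_k)$. Thus the inductive argument depends on the \emph{extended} version of Theorem~\ref{multgcns} in which the digit sets are allowed to be arbitrary systems of coset representatives. Once that extension is granted, the backward division algorithm from the proof of Theorem~\ref{multgcns} applies without essential change: one simply has to track polynomial representatives of the digits rather than scalars from $\E$, and the two-step recurrence (the one for $k^{(j)}$, responsible for the FEP of the $P_1$-component, and the one for $\ell^{(j)}$, responsible for the FEP of the $\mathcal{M}'$-component) terminates for the same reasons as before. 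The combinatorial verification that the nested digit set equals $\mathcal{M}$ is routine.
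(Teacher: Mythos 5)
Your proposal is correct and takes essentially the same route as the paper: the authors prove this corollary only by remarking that, once Theorem~\ref{multgcns} has been extended to digit sets not contained in $\E$, the statement ``follows by induction on the number of factors,'' which is precisely the induction you carry out (peeling off $P_1$ against $P_2\cdots P_k$) and precisely the dependency on the extended theorem that you flag. The only thing to note is that your grouping needs the extension for the digit set of the \emph{second} factor, whereas the alternative grouping $(P_1\cdots P_{k-1},P_k)$ needs it for the first; both work and both require the extension, so there is no substantive difference.
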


Note that Theorem~\ref{multgcns} and its corollary are asymmetric.
Thus, by permutation of the $P_i$, one can obtain other digit
systems.
\end{section}

\begin{section}{Generalisation of known digit systems} \label{sec4}

Several known families of digit systems have the form
$(\E[x])/(P),X,\N$ for a monic polynomial $P$, e.g. taking
$\E=\mathbb{Z}$ or $\E=\FF_q[y]$. These families will now be
generalised to the case where $P$ is not necessarily monic.
\begin{subsection}{Generalisation of canonical number systems}\label{CNS}
Assume that $\E=\ZZ$.  If $P\in\ZZ[x]$ is monic and we choose
$\N=\{0,\ldots,\abs{p_0}-1\}$, then $(\R,X,\N)$ has the finite
expansion property if and only if the pair $(P,\N)$ is a canonical
number system (CNS) in the sense of \cite{Petho:91}. What we have
done so far, allows us to extend this definition as follows.

\begin{definition}
Let $P=p_dx^d+\cdots
+p_0 \in \ZZ[x]$, $\R=\ZZ[x]/(P)$ and $\N=\{0,\ldots,\abs{p_0}-1\}$.
If $(\R,X,\N)$ has the finite expansion property, then we call
$(P,\N)$ a {\em canonical number system} (CNS).
\end{definition}

It is an open problem to characterise all CNS, even in the monic
case. Many partial results have been obtained (see
\cite[Section~3.1]{BBLT2006} and the literature given there),
several of which immediately generalise to the non-monic case. One
of the most promising directions here is the characterisation of CNS
in terms of the discrete dynamical systems called shift radix
systems (SRS).

\begin{definition}[first given in \cite{Akiyama-Borbeli-Brunotte-Pethoe-Thuswaldner:05}] \label{DefSRS}
Let $d$ be a positive integer. For a $\mathbf{r} \in \RR^d$ define
the function
\begin{align*}
  \tau_\mathbf{r}:\ZZ^d & \rightarrow \ZZ^d ,\\
  \mathbf{z}=(z_0,\ldots,z_{d-1}) &
    \mapsto (z_1,\ldots,z_{d-1},-\fl{\mathbf{rz}}),
\end{align*}
where $\mathbf{rz}$ is the scalar product of $\mathbf{r}$ and
$\mathbf{z}$. The mapping $\tau_\mathbf{r}$ is called a {\em shift
radix system} (SRS) if for any $\mathbf{z} \in \ZZ^d$ there exists a
$k \in \NN$ such that the $k$-th iterate of $\tau_{\bf r}$ satisfies
$\tau_\mathbf{r}^k(\mathbf{z}) = \mathbf{0}$. Define the sets
\begin{align*}
  \D_{d}:= & \left\{{\bf r} \in \RR^{d} \mid \mbox{each orbit of } \tau_{\bf r} \mbox{ is ultimately
    periodic} \right\} \mbox{ and} \\
  \D_{d}^{(0)}:= & \left\{{\bf r} \in \RR^{d} \mid \tau_{\bf r} \mbox{ is an SRS}
  \right\}.
\end{align*}
\end{definition}
The main result relating CNS with SRS  is proved for monic
polynomials in \cite[Theorem
3.1]{Akiyama-Borbeli-Brunotte-Pethoe-Thuswaldner:05}. It can be
generalised also to non-monic polynomials as follows.
\begin{theorem}
\label{ThmSRSLink} Let $P(x)=p_dx^d+\cdots+p_0 \in \ZZ[x]$,
$\mathcal{R}=\ZZ[x]/(P)$, $\N=\{0,\ldots,\abs{p_0}-1\}$ and $$ {\bf
r}=\left(\tfrac{p_d}{p_0},\tfrac{p_{d-1}}{p_0},\ldots,
\tfrac{p_1}{p_0}\right). $$ Then the following assertions hold:
\begin{itemize}
\item $(\mathcal{R},X,\mathcal{N})$ has the FEP (i.e., $(P,\N)$ is a
CNS) if and only if ${\bf r}\in \D_d^{(0)}$.
\item $(\mathcal{R},X,\mathcal{N})$ has the PEP if and only if ${\bf r}\in
\D_d$.
\end{itemize}
\end{theorem}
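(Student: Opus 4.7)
The plan is to apply Theorem~\ref{kern} to reduce the question from all of $\R$ to the Brunotte module $\Lambda_P$, and then to recognise the resulting dynamical system literally as the shift radix system $\tau_{\mathbf{r}}$ (up to at most a harmless sign conjugation).

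Since $\N=\{0,\ldots,\abs{p_0}-1\}\subset\ZZ=\E$, Theorem~\ref{kern} applies directly: $(\R,X,\N)$ has the FEP (resp.\ PEP) if and only if every element of $\Lambda_P$ has a finite $X$-ary expansion (resp.\ periodic digit sequence), and the restriction of $T$ to $\Lambda_P$ is given in the coordinates $(a_0,\ldots,a_{d-1})$ relative to the Brunotte basis by equation~(\ref{EqTBrunotte}). I would first note that $\Lambda_P\cong\ZZ^d$ as abelian groups via $(a_0,\ldots,a_{d-1})\mapsto\sum a_iw_i$: the $w_i$ have strictly increasing degrees with leading coefficient $p_d$, which is not a zero divisor, so any relation $\sum a_iw_i\equiv 0\pmod P$ forces all $a_i$ to vanish, and the Brunotte coordinates really identify $T|_{\Lambda_P}$ with a self-map of $\ZZ^d$.

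Next, I would match the explicit formula with Definition~\ref{DefSRS}. Writing $y=\sum_{i=0}^{d-1}a_ip_{d-i}$, we have $\mathbf{r}\cdot\mathbf{a}=y/p_0$, while the last coordinate of $T(\mathbf{a})$ in~(\ref{EqTBrunotte}) is $-(y-e_0)/p_0$, with $e_0\in\{0,\ldots,\abs{p_0}-1\}$ the unique residue of $y$ modulo $\abs{p_0}$. When $p_0>0$, the digit $e_0$ is $y\bmod p_0$ in the standard sense, hence $(y-e_0)/p_0=\fl{y/p_0}$ and $T|_{\Lambda_P}=\tau_{\mathbf{r}}$ on the nose. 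When $p_0<0$, a short computation shows that coordinate-wise negation $\sigma:\mathbf{a}\mapsto-\mathbf{a}$ conjugates the two maps, i.e.\ $\tau_{\mathbf{r}}=\sigma\circ T|_{\Lambda_P}\circ\sigma^{-1}$.

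In either case, $T|_{\Lambda_P}$ and $\tau_{\mathbf{r}}$ are conjugate by a bijection of $\ZZ^d$ fixing $0$, so the orbit structures coincide: every orbit of $T|_{\Lambda_P}$ reaches $0$ if and only if every orbit of $\tau_{\mathbf{r}}$ does, and similarly for ``eventually periodic''. Combined with Theorem~\ref{kern}, this yields both equivalences claimed by the theorem. The only delicate point of the argument is the sign bookkeeping when $p_0<0$: the floor in the SRS definition is not invariant under negation, while the residue system $\{0,\ldots,\abs{p_0}-1\}$ depends only on $\abs{p_0}$; conjugation by $\sigma$ is precisely what absorbs this asymmetry and keeps the proof transparent.
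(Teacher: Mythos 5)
Your proposal is correct and takes essentially the same route as the paper: reduce to the Brunotte module via Theorem~\ref{kern} and then read off, using \eqref{qstern}, that $T$ restricted to $\Lambda_P$ in Brunotte coordinates is the shift radix map $\tau_{\mathbf{r}}$. You are in fact more careful than the paper's own proof, which simply asserts $a_d=-\fl{\mathbf{r}\cdot(a_0,\ldots,a_{d-1})}$ (literally valid only for $p_0>0$), whereas your conjugation by $\mathbf{a}\mapsto-\mathbf{a}$ correctly absorbs the floor/sign asymmetry when $p_0<0$.
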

\begin{proof}
As in Definition~\ref{wis}, let $\{w_0,\ldots,w_{d-1}\}$ be the
Brunotte basis of $\E[x]$ modulo $P$. One easily computes that
$w_k=\sum_{i=0}^k p_{d-i}X^{k-i}$. By Theorem~\ref{kern},
$(\mathcal{R},X,\mathcal{N})$ has the FEP if and only if each
element $A$ of the shape
\[
  A=A^{(0)}=\sum_{i=0}^{d-1}a_iw_i, \quad a_i \in \ZZ,
\]
has a finite $X$-ary expansion.
In the proof of Theorem~\ref{kern} we showed that an application of $T$ yields
$A^{(0)}=e_0 +XA^{(1)}$ with
\[
  A^{(1)}=T(A^{(0)})=\sum_{i=0}^{d-1} a_{i+1}w_i
\]
and $e_0=D_\N(A^{(0)}) \in \N$. In order to find a value of $a_d$,
we apply \eqref{qstern}; by the form of the digit set $\N$ we find
\[
  a_d=-q=-\fl{{\bf r} \cdot (a_0,\ldots,a_{d-1})}.
\]
Thus $A^{(1)}=(w_0,\ldots,w_{d-1})\cdot
\tau_{\mathbf{r}}\!\left((a_0,\ldots,a_{d-1})\right)$. Recall that
$(\mathcal{R},X,\mathcal{N})$ has the FEP if and only if successive
application of $T$ to each $A \in \Lambda_P$ ends up in $0$, and we
see that this is equivalent to $\tau_{\mathbf{r}}$ being an SRS. The
second assertion can be shown analogously.
\end{proof}

The connection just given allows for an easy proof of the following
criterion, which was given for the monic case in
\cite[Proposition~7]{Gilbert:81} and \cite{Kovacs:81a}.

\begin{theorem}
  Let $P=p_dx^d+\cdots+p_1x+p_0 \in \ZZ[x]$ and $\N=\{0,\ldots,\abs{p_0}-1\}$.
  Suppose
  $$
   p_0\geq2\quad\mbox{and}\quad p_0 \ge p_1 \ge \ldots \ge p_d > 0.
  $$
  Then $(P,\N)$ is a CNS.
\end{theorem}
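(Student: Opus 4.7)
The strategy is to apply Theorem~\ref{ThmSRSLink} to reduce the claim to a statement about shift radix systems, and then establish the SRS property via a summation-by-parts argument exploiting the monotonicity of the coefficients of $P$.

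By Theorem~\ref{ThmSRSLink}, $(P,\N)$ is a CNS if and only if the vector
\[
  \mathbf{r}=\left(\frac{p_d}{p_0},\frac{p_{d-1}}{p_0},\ldots,\frac{p_1}{p_0}\right)\in\RR^d
\]
lies in $\D_d^{(0)}$. Writing $r_i:=p_{d-i}/p_0$, the hypothesis $p_0\ge p_1\ge\cdots\ge p_d>0$ translates to
\[
  0<r_0\le r_1\le\cdots\le r_{d-1}\le 1,
\]
while $p_0\ge 2$ ensures $\abs{\N}\ge 2$. Thus it suffices to show that every orbit of $\tau_{\mathbf{r}}$ eventually reaches $\mathbf{0}$.

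The central tool is summation by parts. Setting $\Delta_0:=r_0$ and $\Delta_j:=r_j-r_{j-1}$ for $1\le j\le d-1$, and defining the tail sums $T_j(\mathbf{z}):=z_j+z_{j+1}+\cdots+z_{d-1}$ for $\mathbf{z}=(z_0,\ldots,z_{d-1})\in\ZZ^d$, one obtains the identity
\[
  \mathbf{r}\cdot\mathbf{z}=\sum_{j=0}^{d-1}\Delta_j\,T_j(\mathbf{z}).
\]
Since $\Delta_j\ge 0$ by monotonicity and $\sum_j\Delta_j=r_{d-1}\le 1$, this yields the key bound $\abs{\mathbf{r}\cdot\mathbf{z}}\le\max_j\abs{T_j(\mathbf{z})}$. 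Moreover, the iterate $\tau_{\mathbf{r}}(\mathbf{z})$ has tail sums $T_j'=T_{j+1}+z_d$ for $0\le j\le d-1$ (with $T_d:=0$), where $z_d=-\lfloor\mathbf{r}\cdot\mathbf{z}\rfloor$. Iterating this shifted recursion, one constructs a Lyapunov-type function on $\ZZ^d$ whose value eventually strictly decreases along every non-zero orbit, forcing the orbit into a bounded region and, finally, to $\mathbf{0}$.

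The main obstacle is the boundary case $r_{d-1}=1$ (equivalently $p_0=p_1$), where the bound above fails to be strictly contracting. Here one must exploit the additional rounding gain provided by the floor function together with a finer sign analysis of the tail sums to rule out non-trivial periodic cycles, generalising the classical Kovács~\cite{Kovacs:81a} and Gilbert~\cite{Gilbert:81} arguments from the monic setting. Once $\mathbf{r}\in\D_d^{(0)}$ is verified, Theorem~\ref{ThmSRSLink} yields the CNS property.
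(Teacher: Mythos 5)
Your opening reduction is exactly the paper's: by Theorem~\ref{ThmSRSLink} the claim is equivalent to $\mathbf{r}=(p_d/p_0,\ldots,p_1/p_0)\in\D_d^{(0)}$, and the hypothesis on the $p_i$ translates into monotonicity of the coordinates of $\mathbf{r}$. The paper then finishes in one line by citing Theorem~3.5 of Akiyama--Brunotte--Peth\H{o}--Thuswaldner (2006), which asserts that monotonically ordered parameters in $[0,1)$ always give an SRS. You instead set out to prove that fact from scratch, and this is where the gap lies: everything after your summation-by-parts identity is assertion, not proof. The identity $\mathbf{r}\cdot\mathbf{z}=\sum_j\Delta_j T_j(\mathbf{z})$ and the bound $\abs{\mathbf{r}\cdot\mathbf{z}}\le\max_j\abs{T_j(\mathbf{z})}$ are correct, but they do not by themselves produce a decreasing quantity: from $T_j'=T_{j+1}+z_d$ with $\abs{z_d}\le\max_j\abs{T_j(\mathbf{z})}+1$ one only gets $\max_j\abs{T_j'}\le 2\max_j\abs{T_j(\mathbf{z})}+1$, so the obvious candidate can a priori grow under one step of $\tau_{\mathbf{r}}$. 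The sentence ``one constructs a Lyapunov-type function whose value eventually strictly decreases along every non-zero orbit'' is precisely the content of the theorem you are trying to prove; without exhibiting that function and verifying its decrease, nothing has been established. Likewise, ``a finer sign analysis of the tail sums to rule out non-trivial periodic cycles'' in the case $r_{d-1}=1$ names the difficulty without resolving it.

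To be fair, you have correctly located where the work lies: the tail-sum structure is indeed how such results are proved, and the boundary case $p_1=p_0$ genuinely needs separate treatment --- note that the result the paper cites requires $p_1/p_0<1$ strictly, so even the paper's own one-line proof is silent on that case. But as written, your argument proves nothing beyond the reduction to SRS, which is the easy half. Either cite the known SRS result as the paper does (restricting, if necessary, to $p_1<p_0$), or carry out the contraction argument in full: specify the Lyapunov function, prove its monotonicity along orbits, show that every orbit enters a fixed finite set, and then exclude nonzero cycles inside that set.
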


\begin{proof}
Theorem~3.5 of \cite{Akiyama-Brunotte-Pethoe-Thuswaldner:06} tells
us that if $0\leq r_1\leq\ldots\leq r_d<1$ then
$\mathbf{r}\in\D_d^{(0)}$. Therefore, the result follows directly
from Theorem~\ref{ThmSRSLink}].
\end{proof}


In \cite{Akiyama-Scheicher:07} so-called symmetric canonical number
systems (SCNS) have been introduced. The difference to usual CNS is
the digit set: it is almost symmetric around $0$, {\em viz.}
$$\N=\left[-\frac{\abs{p_0}}{2},\frac{\abs{p_0}}{2}\right) \cap
\ZZ.$$ In \cite{Surer:09} the definition has been generalised to any
digit set consisting of $\abs{p_0}$ consecutive integers including
$0$. We will give these definitions in our formalism, generalised to
not necessarily monic polynomials.
\begin{definition}[{{\it cf.}~\cite{Akiyama-Scheicher:07,Surer:09}}]
Let $\varepsilon\in [0,1)$, $P=p_dx^d+\cdots
+p_0 \in \ZZ[x]$, $\R=\ZZ[x]/(P)$ and
$\N:=[-\varepsilon\abs{p_0},(1-\varepsilon)\abs{p_0}) \cap \ZZ$. If
$(\R,X,\N)$ has the finite expansion property, then we call $(P,\N)$
an $\varepsilon$-canonical number system ($\varepsilon$-CNS). A
$\frac{1}{2}$-CNS is also called symmetric canonical number system
(SCNS).
\end{definition}
Analogously to Theorem~\ref{ThmSRSLink},  $\eps$-CNS are closely
related to a slight modification of SRS.

For an $\varepsilon \in [0,1)$ and an $\mathbf{r} \in \RR^d$ let
\[\begin{split}
\tau_{\mathbf{r},\varepsilon}: \, & \ZZ^d  \rightarrow \ZZ^d, \\
& \mathbf{z}=(z_0,\ldots,z_{d-1}) \mapsto
(z_1,\ldots,z_{d-1},-\fl{\mathbf{rz}+\eps}).
\end{split}\]
Define
\begin{align*}
  \D_{d,\eps}:= & \left\{{\bf r} \in \RR^{d} \mid \mbox{each orbit of } \tau_{\bf r,\eps} \mbox{ is ultimately
    periodic} \right\} \mbox{ and} \\
  \D_{d,\eps}^{(0)}:= & \left\{{\bf r} \in \RR^{d} \mid \mbox{each orbit of } \tau_{\bf r,\eps} \mbox{ is ultimately
    zero}
  \right\}.
\end{align*}
Note that $\D_d=\D_{d,0}$ and $\D_d^{(0)}=\D_{d,0}^{(0)}$. The
following theorem can be proved in a similar way as
Theorem~\ref{ThmSRSLink}.
\begin{theorem}[{{\it cf.}~\cite{Akiyama-Scheicher:07,Surer:09}}] \label{ThmeSRSLink}
Let $P(x)=p_dx^d+\cdots+p_0 \in \ZZ[x]$, $\mathcal{R}=\ZZ[x]/(P)$,
$\varepsilon\in [0,1)$,
$\N=[-\varepsilon\abs{p_0},(1-\varepsilon)\abs{p_0}) \cap \ZZ$ and
$$ {\bf r}=\left(\tfrac{p_d}{p_0},\tfrac{p_{d-1}}{p_0},\ldots,
\tfrac{p_1}{p_0}\right). $$  Then  the following assertions hold:
\begin{itemize}
\item $(\mathcal{R},X,\mathcal{N})$ has
the FEP (i.e., $(P,\N)$ is an $\varepsilon$-CNS) if and only if
${\bf r}\in \D_{d,\varepsilon}^{(0)}$.
\item $(\mathcal{R},X,\mathcal{N})$
has the PEP if and only if ${\bf r}\in \D_{d,\varepsilon}$.
\end{itemize}
\end{theorem}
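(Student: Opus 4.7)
The plan is to mirror the proof of Theorem~\ref{ThmSRSLink} step for step, modifying only the digit-selection calculation to accommodate the shifted interval $[-\varepsilon|p_0|,(1-\varepsilon)|p_0|)$. First I would invoke Theorem~\ref{kern} to reduce the FEP (resp.\ PEP) of $(\R,X,\N)$ to the corresponding property of the dynamics of $T$ on the Brunotte submodule $\Lambda_P$. Writing a general element as $A=\sum_{i=0}^{d-1}a_i w_i\in\Lambda_P$ and identifying it with its coordinate vector $\mathbf{z}=(a_0,\ldots,a_{d-1})\in\ZZ^d$, formula~(\ref{EqTBrunotte}) from the proof of Theorem~\ref{kern} tells us that
\[
T(\mathbf{z})=\left(a_1,\ldots,a_{d-1},\,-\frac{s-e_0}{p_0}\right),\qquad s:=\sum_{i=0}^{d-1}a_i\,p_{d-i},
\]
where $e_0$ is the unique element of $\N$ with $e_0\equiv s\pmod{p_0}$.

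The only new ingredient compared to Theorem~\ref{ThmSRSLink} is the evaluation of the last component under the $\varepsilon$-shifted digit set. Noting that $\mathbf{r}\mathbf{z}=s/p_0$, and assuming without loss of generality that $p_0>0$ (replacing $P$ by $-P$ leaves $\R$, $\N$, $T$ and also $\mathbf{r}$ unchanged), the two defining conditions $e_0\equiv s\pmod{p_0}$ and $e_0\in[-\varepsilon p_0,(1-\varepsilon)p_0)$ force the integer $(s-e_0)/p_0$ to lie in the unique length-one half-open interval $(\mathbf{r}\mathbf{z}+\varepsilon-1,\,\mathbf{r}\mathbf{z}+\varepsilon]$, whence $(s-e_0)/p_0=\fl{\mathbf{r}\mathbf{z}+\varepsilon}$. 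Therefore the last Brunotte coordinate of $T(\mathbf{z})$ is $-\fl{\mathbf{r}\mathbf{z}+\varepsilon}$, and the map induced by $T$ on $\ZZ^d$ is precisely $\tau_{\mathbf{r},\varepsilon}$.

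Having identified the two dynamical systems, both equivalences follow directly from the definitions of $\D_{d,\varepsilon}^{(0)}$ and $\D_{d,\varepsilon}$: via Theorem~\ref{kern}, the FEP corresponds to every $T$-orbit in $\Lambda_P$ terminating at $0$, equivalently every $\tau_{\mathbf{r},\varepsilon}$-orbit in $\ZZ^d$ terminating at $\mathbf{0}$, i.e.\ $\mathbf{r}\in\D_{d,\varepsilon}^{(0)}$; and the PEP corresponds analogously to eventual periodicity, i.e.\ $\mathbf{r}\in\D_{d,\varepsilon}$. The main obstacle is the elementary but sign-sensitive interval calculation verifying $(s-e_0)/p_0=\fl{\mathbf{r}\mathbf{z}+\varepsilon}$; this delicacy arises because the digit set is defined through $|p_0|$ while the quotient uses $p_0$, and is neutralised by the preliminary normalisation $p_0>0$ described above.
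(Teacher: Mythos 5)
Your proposal is correct and follows exactly the route the paper intends: the paper gives no separate proof of Theorem~\ref{ThmeSRSLink}, stating only that it ``can be proved in a similar way as Theorem~\ref{ThmSRSLink}'', and your argument is precisely that adaptation, reducing to $\Lambda_P$ via Theorem~\ref{kern} and identifying $T$ with $\tau_{\mathbf{r},\varepsilon}$ through the quotient $q$ from \eqref{qstern}. Your explicit interval computation showing $(s-e_0)/p_0=\fl{\mathbf{r}\mathbf{z}+\varepsilon}$, together with the normalisation $p_0>0$, is a welcome piece of care that the paper itself glosses over.
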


\begin{example}
We are now able to decide whether the digit systems in
Example~\ref{ex1} have the PEP or the FEP by using
Theorem~\ref{ThmSRSLink} and Theorem~\ref{ThmeSRSLink}. Set
$P(x)=3x^2-2x+5$, $\N=\{0,1,2,3,4\}$ and $\M=\{-2,-1,0,1,2\}$. Thus
we are interested in the mappings $\tau_{\bf r}$ and $\tau_{{\bf
r},\frac{1}{2}}$ corresponding to the vector ${\bf
r}:=\{\tfrac{3}{5},-\tfrac{2}{5}\}$. By
\cite[Lemma~5.2]{Akiyama-Brunotte-Pethoe-Thuswaldner:06} we have
${\bf r} \in \D_2^{(0)}$ and therefore $(\R,X,\N)$ has the FEP. On
the other hand, according to
\cite[Theorem~5.2]{Akiyama-Scheicher:07}, ${\bf r} \not \in
\D_{d,\frac{1}{2}}^{(0)}$ but ${\bf r} \in \D_{d,\frac{1}{2}}$
however. This shows that $(\R,X,\M)$ has the PEP but not the FEP.
\end{example}

\end{subsection}

\begin{subsection}{Generalisation of digit systems over a finite field}\label{FiniteFields}

Let $\E=\FF[y]$ be the ring of polynomials in $y$ over the finite
field $\FF$. It is well-known that $\FF[y]$ is a Euclidean domain.
The corresponding value function $g$ assigns to an element $q$ of
$\FF[y]$ its degree, which we denote by $\deg_y(q)$.

We choose a defining polynomial $P=p_dx^d+\cdots+p_0 \in \FF[y][x]$
and put $\R=\FF[y][x]/(P)$. For the digit set $\N$, the canonical
choice is
\[\N=\{q \in \FF[y]\mid \deg_y(q) < \deg_y{p_0}\}.\]
This gives us a digit system $(\R,X,\N)$. Scheicher and
Thuswaldner~\cite{Scheicher-Thuswaldner:03a} analysed the case where
$P$ is monic in $y$, {\it i.e.,}\ where the leading coefficient
$p_d$ is a nonzero element of $\F$. In their terminology, the pair
$(P,\N)$ is a {\it digit system} if $(\R,X,\N)$ has the finite
expansion property. Their main result is the observation that
$(P,\N)$ is a digit system if and only if $\max_{i =1}^{d-1}
\deg_y(p_i) < \deg_y(p_0)$. We will generalise this statement for
any choice of $p_d$.

\begin{theorem}[{{\it cf.}~\cite[Theorem 2.5]{Scheicher-Thuswaldner:03a}}]\label{ff}
Let $\FF$ be a finite field,  $P(x)= p_dx^d+\cdots+p_1x+p_0 \in
\FF[y][x]$, $\R=\FF[y][x]/(P)$ and $\N=\{q \in \FF[y]\mid \deg_y(q)
< \deg_y{p_0}\}$. Then the following assertions hold:
\begin{itemize}
\item $(\R,X,\N)$
has the FEP if and only if $\max_{i=1}^d \deg_y(p_i) < \deg_y(p_0)$.
\item $(\R,X,\N)$ has the PEP if and only if
$\max_{i=1}^d \deg_y(p_i) \leq \deg_y(p_0).$
\end{itemize}
\end{theorem}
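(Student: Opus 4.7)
My plan is to use Theorem~\ref{kern} to restrict attention to $\Lambda_P\cong\FF[y]^d$, on which $T$ is given by \eqref{EqTBrunotte}. I will write $n=\deg_y p_0$, introduce the degree function $\|\mathbf{a}\|:=\max_i\deg_y a_i$ (with $\deg_y 0=-\infty$), and set $\V_C=\{\mathbf{a}:\|\mathbf{a}\|\le C\}$ — a finite set of cardinality $|\FF|^{d(C+1)}$. The key estimate, obtained by combining $\deg_y e_0<n$ with $\deg_y\big(\sum a_ip_{d-i}\big)\le\|\mathbf{a}\|+\max_{j=1}^d\deg_y p_j$, is
\[
\deg_y\!\Big(\!-\!\tfrac{\sum_{i=0}^{d-1}a_ip_{d-i}-e_0}{p_0}\Big)\le\|\mathbf{a}\|+\max\nolimits_{j=1,\ldots,d}\deg_y p_j-n,
\]
and drives the sufficiency half of both statements: if $\max_j\deg_y p_j\le n$ then $\|T(\mathbf{a})\|\le\|\mathbf{a}\|$, so every orbit stays inside the finite set $\V_{\|\mathbf{a}\|}$ and is eventually periodic (PEP); under the strict inequality $\max_j\deg_y p_j<n$, the new last coordinate of $T(\mathbf{a})$ has degree strictly below $\|\mathbf{a}\|$, so iterating $d$ times (during which the first $d-1$ coordinates merely shift) yields $\|T^d(\mathbf{a})\|\le\|\mathbf{a}\|-1$ whenever $\mathbf{a}\ne 0$, hence $T^{kd}(\mathbf{a})=0$ for $k$ large enough, proving the FEP.

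For the necessity of the FEP (with the PEP granted), I would handle the case $\deg_y p_j=n$ for some $j\in\{1,\ldots,d\}$ by restricting $T$ to $\V_0=\FF^d$, which the above estimate shows to be $T$-invariant. A direct computation — using that on $\V_0$ the digit $e_0=\sum c_i(p_{d-i}\bmod p_0)$ is an $\FF$-linear function of $\mathbf{c}$ — reveals $T|_{\V_0}$ to be the $\FF$-linear companion map
\[
T(c_0,\ldots,c_{d-1})=\Big(c_1,\ldots,c_{d-1},-\tfrac{1}{q_0}\textstyle\sum_{i=0}^{d-1}c_iq_{d-i}\Big),
\]
where $q_j\in\FF$ is the coefficient of $y^n$ in $p_j$ (so $q_j=0$ whenever $\deg_y p_j<n$). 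Its characteristic polynomial is proportional to $q_0\lambda^d+q_1\lambda^{d-1}+\cdots+q_d$, which fails to be $q_0\lambda^d$ precisely when at least one $q_j$ with $j\ge 1$ is non-zero — that is, under the hypothesis of this case. Hence $T|_{\V_0}$ is not nilpotent, its eventual image $W=\bigcap_{k\ge 0}T^k(\V_0)\subseteq\V_0$ is a non-zero $\FF$-subspace on which $T$ acts bijectively, and any non-zero element of $W$ provides a periodic point of $T$ distinct from $0$, violating the FEP by Lemma~\ref{lemma3}.

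The remaining direction — necessity of the PEP — is where I expect the main obstacle. Suppose $\deg_y p_{j^*}=n+c$ for some $j^*\in\{1,\ldots,d\}$ with $c>0$. Starting from $\mathbf{a}_0$ having $y^M$ in position $d-j^*$ and zeros elsewhere, the first application of $T$ produces a last coordinate of exact degree $M+c$ whose leading term $-(q_{j^*}/q_0)\,y^{M+c}$ comes from the uncontested dominant summand $y^M\cdot p_{j^*}$. My plan would then be to follow the chain of iterates $b_1,b_{1+j^*},b_{1+2j^*},\ldots$ created at every $j^*$-th step, showing inductively that each time the current chain element $b_m$ reaches position $d-j^*$ in the sliding window and multiplies $p_{j^*}$, the next chain element $b_{m+j^*}$ has degree exactly $\deg_y b_m+c$; the dominant chain contribution strictly outranks all subdominant competitors (bounded by $\|\mathbf{a}_{m+j^*-1}\|+n$), precluding cancellation of leading coefficients. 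Consequently $\|T^k(\mathbf{a}_0)\|\to\infty$ and the orbit cannot be eventually periodic. The hardest part will be formalising this combinatorial bookkeeping uniformly in $j^*$ and $d$; as a conceptual guide I would keep the Newton polygon of $P$ at the valuation $v(f)=-\deg_y f$ in view: under the hypothesis $\max_j\deg_y p_j>n$ it has a segment of negative slope, which produces a root of $P$ strictly inside the $v$-unit disk, equivalently an eigenvalue of modulus exceeding $1$ for the ``top-degree'' linearization of $T$, and this spectral picture is the mechanism underlying the unbounded growth.
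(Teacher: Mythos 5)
Your reduction to $\Lambda_P$ via Theorem~\ref{kern}, the degree estimate for the map \eqref{EqTBrunotte}, and the resulting proofs of both sufficiency directions are correct, as is the treatment of the boundary case $\max_{j\ge1}\deg_y p_j=\deg_y p_0$ via the induced $\FF$-linear companion map on $\V_0=\FF^d$ and its non-nilpotency (Fitting decomposition, nonzero periodic point, Lemma~\ref{lemma3}). Note that this is a genuinely different route from the paper's, whose proof consists of the reduction to $\Lambda_P$ followed by a citation of \cite[Lemmas~2.2, 2.4 and Theorem~3.1]{Scheicher-Thuswaldner:03a} together with the observation that those proofs never use monicity; three of your four implications are a valid self-contained replacement for that citation.

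The genuine gap is the necessity of the PEP, i.e.\ that $\max_{j\ge1}\deg_y p_j>n:=\deg_y p_0$ forces an unbounded orbit; your FEP-necessity also leans on this (through Lemma~\ref{lemma2}) whenever the maximum \emph{strictly} exceeds $n$, since the linear argument on $\V_0$ only covers the case where all $\deg_y p_j\le n$. The chain argument as sketched does not close. When $b_{m+j^*}$ is produced, the competing summands $a_ip_{d-i}$ are bounded in degree by $\|\mathbf a\|+\max_j\deg_y p_j=\|\mathbf a\|+n+c$, not by $\|\mathbf a\|+n$ as you state; so if several coefficients attain the maximal degree $n+c$ (or even just one, once earlier iterates have inflated other coordinates to degree $\ge\deg_y b_m$), a competitor ties with the chain contribution $\deg_y b_m+n+c$ and cancellation of the top coefficient is possible a priori. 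For instance, for $P=y^2x^2+y^2x+y$ over $\FF_2[y]$, starting from $(y^M,0)$ one reaches the state $(y^{M+2}+y^{M+3},\,y^{M+4})$ only after the two summands $a_0p_2$ and $a_1p_1$ have collided in degree $M+4$ and cancelled there; growth survives only because the very top term happens to be uncontested. What is missing is an invariant that survives such ties, e.g.\ a weighted degree $V(\mathbf a)=\max_i(\deg_y a_i+\mu_i)$ with weights read off the Newton polygon of $P$ at the valuation $-\deg_y$ (the object you mention only as a heuristic), for which one proves $V$ increases by a fixed positive amount along a suitable subsequence of iterates; alternatively, one can do what the paper does and import \cite[Theorem~3.1]{Scheicher-Thuswaldner:03a}, whose proof carries out exactly this bookkeeping. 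Until one of these is supplied, the second bullet (and with it part of the first) is unproved.
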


\begin{proof}
Because of Theorem~\ref{kern} it is enough to check finiteness and
periodicity for the Brunotte-module $\Lambda_P$. In \cite[Lemmas 2.2
and 2.4]{Scheicher-Thuswaldner:03a} the first assertion was shown
for monic polynomials, {\it i.e.,}\ where $\Lambda_P=\R$ (see also
the much simpler proof in \cite[Theorem~2.2]{BBST2009}). The second
assertion was shown in
\cite[Theorem~3.1]{Scheicher-Thuswaldner:03a}. These proofs can be
immediately adapted to the non-monic case, since they do not use the
monicity  condition $\deg_y(p_d)=0$.
\end{proof}
\begin{remark}
The first assertion of Theorem~\ref{ff} remains valid even if $\FF$
is not assumed to be finite (cf.~\cite{BBST2009}). However, this
case does not fit in our framework as the digit set $\N$ is infinite
in this case.
\end{remark}
\begin{example}
We will prove that the number system $(\FF_2[y][x]/(P),X,\N)$
defined in Example~\ref{ex2} has the FEP. Recall that
$P(x)=(y+1)x^2+yx+(y^2+1)\in\FF_2[y][x]$ and $\N=\{1,y,y+1,y^3+y\}$.
We cannot use Theorem~\ref{ff} here, since the digit set $\N$ has
the wrong shape. But we will show a strategy how to decide the
problem with the aid of the theorem, which has also been used
successfully in the case of number systems over $\ZZ$ (see
\cite[Section 3]{VanDeWoestijne:09}). We will show that every $A \in
\R$ has a finite $X$-ary expansion with digits in $\N$. For $A=0$ we
can take the empty expansion. For $A\ne 0$, however, we will make
use of the zero cycle, which gives us the expansion
$0=(y^3+y)+X+X^2+X^3+(y+1)X^4$. Now let $A$ be an arbitrary nonzero
element of $\R$. By Theorem~\ref{ff} we know that $A$ has a finite
$X$-ary expansion with digits in the standard digit set
$\N'=\{0,1,y,y+1\}$, say,
\[
  A=\sum_{j=0}^h b_j X^j
\]
with $b_j \in \N'$ for all $j$ and with $h$ minimal. If all of the
$b_j$ are contained in $\N$, {\it i.e.}, $b_j\ne 0$ for all $j$, we
are done. Otherwise obey the following instructions:
\begin{enumerate}
  \item
    Let $i$ be the smallest index with $b_i=0$.\label{pt1}
  \item
    Add successively $y^3+y$ to $b_i$, $1$ to $b_{i+1}, b_{i+2}$ and $b_{i+3}$,
    and $y+1$ to $b_{i+4}$, if necessary changing the value of $h$
    such that $b_h\ne0$.
  \item
    If $b_j \ne 0$ for $0\le j\le h$, then
    $A=\sum_{j=0}^h b_j X^j$ is the finite $X$-ary expansion of $A$, otherwise
    return to \eqref{pt1}.
\end{enumerate}
We immediately see that $i$ increases. Furthermore, the addition of $1$ or
$Y+1$, respectively, to an element of $\N'$ gives either $0$ or an element of
$\N$. Hence, at any moment we have $b_j \in \N$ for each $j<i$. The question is
whether the procedure terminates. Note that from the moment that $h$ starts to
increase, if it occurs, only the most significant $5$ digits are of interest,
and in fact, we can reformulate the above procedure as the iteration of a
map $\Phi:S\rightarrow S$, where $S=\{ (c_1,\ldots,c_4) : c_i \in\N\cup\N' \}$,
defined as follows:
$$
  \Phi(c_1,\ldots,c_4)=\begin{cases}
      (c_2,c_3,c_4,0) & \text{ if } c_1\ne 0; \\
      (c_2+1,c_3+1,c_4+1,y+1) & \text{ if } c_1 = 0. \end{cases}
$$
Here, the new digit $c_1$, be it changed to $y^3+y$ or unchanged, is
immediately discarded, and the $4$-digit window on the expansion
moves one step to the right. We are done when for every
$(c_1,\ldots,c_4)$ with digits in $\N'$, there exists an integer $m$
such that $\Phi^m(c_1,\ldots,c_4)=(0,0,0,0)$. This is easily
verified by computer. For example, if the original expansion ends
(on the most significant side) in $1,0,y,1$, we obtain
\begin{align*}
  (1,0,y,1)&\rightarrow (0,y,1,0)\rightarrow (y+1,0,1,y+1) \rightarrow
  (0,1,y+1,0) \rightarrow (0,y,1,y+1) \\
    &\rightarrow (y+1,0,y,y+1)
  \rightarrow (0,y,y+1,0) \rightarrow (1,y+1,y,y+1)\\
  &\rightarrow (y+1,y,y+1,0) \rightarrow (y,y+1,0,0) \rightarrow
  (y+1,0,0,0) \rightarrow (0,0,0,0).
\end{align*}
Thus, the procedure terminates in all cases and actually yields the
finite $X$-ary expansion for $A$, and the FEP for $(\R,X,\N)$ has
been established.
\end{example}

\end{subsection}

\end{section}

\begin{section}{Sets of witnesses for the finite expansion
property} \label{sec5}

In Section~\ref{sec2} above, we already gave some results to the
effect that the PEP and FEP of a digit system $(\R,X,\N)$ can be
decided by looking at the expansions of elements in certain
$\E$-submodules of $\R$. Building on these results, we will show
here that for a large class of digit systems, the PEP and FEP can be
decided by checking a finite subset only.

\begin{definition}\label{DefSOW}
Let $(\R,X,\N)$ be a digit system, and $\SSS$ an arbitrary subset of
$\R$. A set $\V \subset \SSS$ with the properties
\begin{enumerate}
\item \label{prop1}
  every $A\in \SSS$ can be written as a finite sum of elements of
  $\V$;
\item \label{prop2}
  for each $e\in \N$, the set $\V$ is closed under $A\mapsto
  T(A+e)$
\end{enumerate}
is called a {\it set of witnesses} of $\SSS$ with respect to
$(\R,X,\N)$.
\end{definition}

\begin{remark} \label{RmkGen}
As to \eqref{prop1}, when $\SSS$ is an additive subgroup of $\R$,
the condition can be satisfied by including in $\V$ a set of
generators of $\SSS$ as well as their (additive) inverses.
\end{remark}

\begin{lemma}\label{AllgBrunHelp}
Let $(\R,X,\N)$ a digit system, $\SSS$ a subset of $\R$ and $\V$ a
set of witnesses of $\SSS$ with respect to $(\R,X,\N)$. Then all
elements of $\SSS$ admit finite $X$-ary expansions if and only if
the same holds for all witnesses $v\in\V$.
\end{lemma}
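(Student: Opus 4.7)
The forward direction is immediate, since $\V\subseteq\SSS$. For the converse, my plan is to show that the set $\F:=\{B\in\R : B\text{ has a finite $X$-ary expansion}\}$ is closed under adding a single witness, i.e., $B\in\F$ and $v\in\V$ imply $B+v\in\F$. Once this closure property is in hand, starting from $0\in\F$ (the empty expansion) and successively adding the witnesses from a decomposition $A=v_1+\cdots+v_n$ furnished by property~(\ref{prop1}) yields $A\in\F$ for every $A\in\SSS$.

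The crux is a short identity relating $T$ on a sum to $T$ on the individual terms. Writing $B=D_\N(B)+X\,T(B)$ and noting that $D_\N\bigl(v+D_\N(B)\bigr)=D_\N(B+v)$ (both digits reduce to $B+v$ modulo $X$, and digit representatives are unique), one obtains by a direct computation
\[
  T(B+v)\;=\;T(B)\;+\;T\bigl(v+D_\N(B)\bigr).
\]
By property~(\ref{prop2}), applied to $v\in\V$ and the digit $e=D_\N(B)\in\N$, the correction term $T(v+D_\N(B))$ lies back in $\V$.

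With this identity, the closure claim follows by strong induction on the length of the finite $X$-ary expansion of $B$. The base case $B=0$ is trivial, since $B+v=v\in\V\subseteq\F$. For the inductive step, $T(B)$ has strictly smaller expansion length than $B$ and $T(v+D_\N(B))\in\V\subseteq\F$, so the inductive hypothesis applied to this sum places $T(B+v)$ in $\F$, whence $B+v\in\F$. Iterating the closure along the decomposition $A=v_1+\cdots+v_n$ finishes the argument. I do not expect any serious obstacle here: the only substantive point is the identity displayed above, which shows that the ``carry'' one might naively fear to obstruct the induction is precisely the element that property~(\ref{prop2}) is tailor-made to absorb, so no bookkeeping of carries is required and no further structural assumption on $\SSS$ or $\R$ is needed.
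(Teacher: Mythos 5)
Your proof is correct, and its engine is identical to the paper's: the identity
\[
  T(B+v)\;=\;T(B)\;+\;T\bigl(v+D_\N(B)\bigr),
\]
with the correction term sent back into $\V$ by property~(\ref{prop2}), is exactly the computation the paper carries out in~\eqref{thussi}, and both arguments then propagate it along a decomposition $A=v_1+\cdots+v_k$ supplied by property~(\ref{prop1}). Where you differ is in how the induction is organised. The paper argues by contradiction: it takes a non-finitely-expandable $A=v_1+\cdots+v_k$ with $k$ minimal, applies the identity $k-1$ times to get $T(A)=T(v_1)+v_2'+\cdots+v_k'$, and iterates $T$ until $v_1$ is annihilated, leaving $T^n(A)$ as a sum of $k-1$ witnesses and contradicting the minimality of $k$. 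You instead prove the positive closure statement that $B\in\F$ and $v\in\V$ imply $B+v\in\F$, by induction on the minimal $n$ with $T^n(B)=0$, and then walk along the summands starting from $0$. The two organisations are logically interchangeable; yours trades the minimal-counterexample device for a nested induction, which makes it slightly more explicit that the single identity above is all that is needed, and it is constructive in the sense that it bounds the expansion length of $B+v$ by (expansion length of $B$) plus (maximal expansion length of a witness). Do make sure to state explicitly that the base case of your inner induction ($B=0$, so $B+v=v$) invokes $\V\subseteq\F$, which is precisely the hypothesis of the converse direction; with that said, there is no gap.
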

\begin{proof}
The implication ``$\Rightarrow$'' is clear. Thus suppose that all
elements of $\V$ have finite $X$-ary expansions. This means that for
each element $v$ of $\V$ there exists an $n \in \NN$ with
$T^n(v)=0$.

Let $A\in\SSS$ and $v\in\V$. Then
\begin{equation}\label{thussi}
\begin{split}
  T(A+v) &= \frac{A+v-D_\N(A+v)}{X} = \frac{A-D_\N(A) + v +D_\N(A)-D_\N(A+v)}{X} \\
         &= \frac{A-D_\N(A)}{X} + \frac{v+D_\N(A)-D_\N(A+v)}{X} = T(A) +
         v',
\end{split}
\end{equation}
where $v'\in\V$ by property \eqref{prop2} of
Definition~\ref{DefSOW}.

Now suppose there exist elements in $\SSS$ having no finite $X$-ary
expansion; let $A=v_1+\cdots+v_k$ be such an element for which $k$
is minimal. Using \eqref{thussi} iteratively for $k-1$ times, we
have
$$
  T(A)=T(v_1)+v_2'+\cdots+v_k'
$$
for $v_2',\ldots,v_k'\in\V$. As $v_1\in\V$ there exists an $n$ such
that $T^n(v_1)=0$. Thus, repeating the above procedure $n$ times, we
get
$$
  T^n(A) = T^n(v_1)+v_2''+\cdots+v_k'' = v_2''+\ldots+v_k''.
$$
The element $v_2''+\cdots+v_k''$ also cannot have a finite $X$-ary
expansion, because this would imply $T^m(A)=0$ for some $m$. As
$v_2'',\ldots,v_k''\in \V$ this contradicts the minimality of $k$.
\end{proof}

As above, we write $\R_k$ for the submodule of $\R$ generated by
$1,X,\ldots,X^{k-1}$.

\begin{theorem} \label{ThmWitness1}
Let $(\R,X,\N)$ be a digit system, let $k\geq\deg P$ be minimal such
that all digits in $\N$ can be represented by polynomials in $\E[x]$
of degree at most $k$, and let $\V$ be a set of witnesses of $\R_k$.
Then $(\R,X,\N)$ has the FEP if and only if every element in $\V$
has a finite $X$-ary expansion.
\end{theorem}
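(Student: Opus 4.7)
The plan is to obtain the statement as an immediate combination of two results already proved in the paper: Theorem~\ref{ThmBru1} and Lemma~\ref{AllgBrunHelp}. Since $k$ is chosen exactly as in Theorem~\ref{ThmBru1}, that theorem tells us that $(\R,X,\N)$ has the FEP if and only if every element of the submodule $\R_k$ admits a finite $X$-ary expansion. Meanwhile, by hypothesis $\V$ is a set of witnesses of $\R_k$ with respect to $(\R,X,\N)$, so Lemma~\ref{AllgBrunHelp} (applied with $\SSS=\R_k$) tells us that every element of $\R_k$ has a finite $X$-ary expansion if and only if every $v\in\V$ does.

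Thus the proof amounts to the short chain of equivalences: $(\R,X,\N)$ has the FEP $\Longleftrightarrow$ every element of $\R_k$ has a finite $X$-ary expansion $\Longleftrightarrow$ every element of $\V$ has a finite $X$-ary expansion. There is nothing new to verify beyond checking that the hypotheses of the two invoked statements match those of the theorem, which is immediate: the minimality condition on $k$ is the same in both results, and condition~(1) of Definition~\ref{DefSOW} guarantees that $\V\subseteq\R_k$ so that Lemma~\ref{AllgBrunHelp} applies verbatim.

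There is no real obstacle here; the whole content of the theorem is that witness sets can be used in conjunction with the critical-submodule reduction from Section~\ref{sec2}. If anything, the only point worth flagging is that we do \emph{not} need to assume $\N\subseteq\E$ (as we did when using the Brunotte module $\Lambda_P$ in Theorem~\ref{kern}): the weaker reduction to $\R_k$ from Theorem~\ref{ThmBru1} suffices, which is why the theorem is stated in this generality. A stronger version replacing $\R_k$ by $\Lambda_P$ when $\N\subseteq\E$ would follow by the same two-step argument using Theorem~\ref{kern} in place of Theorem~\ref{ThmBru1}.
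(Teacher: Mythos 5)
Your proof is correct and is essentially identical to the paper's, which likewise dispatches the theorem by combining Lemma~\ref{AllgBrunHelp} (with $\SSS=\R_k$) and Theorem~\ref{ThmBru1}. Your additional remarks about the hypotheses matching and about $\N\subseteq\E$ not being needed are accurate but not required.
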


\begin{proof}
  This follows directly by combining the above lemma with Theorem~\ref{ThmBru1}.
\end{proof}

The concept of witness sets is useful mainly in those situations
where we can construct a \emph{finite} witness set. This is the case
for a large class of number systems, as we will now show.

First observe that one can construct a set of witnesses by the
following iterative approach. Let $\V_0\subseteq \SSS$ be an
arbitrary subset satisfying condition \ref{prop1}. Then, for $i\ge
0$, define
\begin{equation} \label{ConstrW}
  \V_{i+1} = \V_i \cup \left\{ T(v+e) \mid v\in\V_i, e\in\N \right\},
  \text{ and }
  \V = \bigcup_{i\ge 0} \V_i.
\end{equation}

We apply this construction to the module $\R_k$ as in
Theorem~\ref{ThmWitness1}. As this module is finitely generated, we
can start off with a \emph{finite} set $\V_0$, in view of
Remark~\ref{RmkGen}. Now the question is whether the sequence of the
$\V_i$ is eventually stable, and whether the resulting set $\V$ is
finite.

An important special case where this can be proved is the case where
$\R$ can be embedded as a discrete set in a finite-dimensional
complex vector space. Here we examine whether the operator $A\mapsto
XA$ is expanding, {\it i.e.}, whether all its eigenvalues have
modulus greater than $1$. It is well known that the expanding
property implies the PEP in this case (special instances occur in
\cite{Akiyama-Rao:03,Pethoe:91}; the proof in the general case is
the same).

\begin{proposition} \label{AllgBrun}
Suppose $\R$ is embedded in a finite-dimensional complex vector
space, let $\lambda$ be the operator norm of $A \mapsto A/X$ on
$\R$, and assume that $\lambda<1$. Then in the construction
\eqref{ConstrW}, we have
$$
  \max_{v\in\V} \|v\| \le \max\left\{ \max_{v\in\V_0} \|v\|,
                                 \frac{2\lambda}{1-\lambda}
                                 \max_{e\in\N}\|e\|\right\}.
$$
In particular, if $\R$ is a discrete set and $\V_0$ is finite, then
$\V$ is finite.
\end{proposition}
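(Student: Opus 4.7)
The plan is to prove by induction on $i$ that every element of $\V_i$ lies in the ball of radius
$$
  C := \max\left\{ \max_{v\in\V_0}\|v\|,\ \frac{2\lambda}{1-\lambda}\max_{e\in\N}\|e\|\right\}
$$
around the origin. Setting $M := \max_{e\in\N}\|e\|$, the base case $i=0$ is built into the definition of $C$. For the inductive step, pick $v\in\V_i$ with $\|v\|\le C$ and $e\in\N$; the new witness is $T(v+e) = (v+e - D_\N(v+e))/X$, where $D_\N(v+e)$ is some element of $\N$, hence of norm at most $M$. Using that $A\mapsto A/X$ has operator norm $\lambda$ and the triangle inequality,
$$
  \|T(v+e)\| \;\le\; \lambda\bigl(\|v\| + \|e\| + \|D_\N(v+e)\|\bigr) \;\le\; \lambda C + 2\lambda M.
$$

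The crux is then the elementary inequality $\lambda C + 2\lambda M \le C$, which is equivalent to $C \ge \tfrac{2\lambda}{1-\lambda}M$; this is true by the very definition of $C$ (and is the reason the factor $\tfrac{2\lambda}{1-\lambda}$ appears in the statement). Consequently $\V_{i+1}$ is contained in the ball of radius $C$, completing the induction. Taking the union over $i$ yields the claimed bound on $\max_{v\in\V}\|v\|$.

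For the final assertion, note that if $\R$ is discrete in the ambient finite-dimensional complex vector space, then its intersection with any bounded set (here, the closed ball of radius $C$) is finite, since a bounded discrete subset of a finite-dimensional normed space contains only finitely many points. As $\V_0$ is assumed finite and $\V$ is contained in this bounded, discrete set, $\V$ itself is finite.

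No real obstacle arises: the argument is essentially that $v\mapsto T(v+e)$ is an affine $\lambda$-contraction on the ambient vector space, so the forward orbit from any bounded initial set remains in an explicit $\lambda$-dependent ball. The only point that requires minor care is handling the digit $D_\N(v+e)$ arising from $v+e$ rather than from $v$ alone, which is why both $\|e\|$ and $\|D_\N(v+e)\|$ contribute the factor of $2M$ in the estimate.
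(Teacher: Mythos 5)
Your proof is correct and follows essentially the same route as the paper: the key estimate $\|T(v+e)\|\le\lambda(\|v\|+\|e\|+\|D_\N(v+e)\|)$ is exactly the paper's, and your explicit induction showing the ball of radius $C$ is invariant is just a slightly more fleshed-out version of the paper's observation that the norm strictly decreases unless $\|A\|<\frac{2\lambda}{1-\lambda}\max_{e\in\N}\|e\|$. You also spell out the discreteness-implies-finiteness step, which the paper leaves implicit; no substantive difference otherwise.
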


\begin{proof}
We have
$$
  \|T(A+e)\| = \left\|\frac{A+e-D_\N(A+e)}{X}\right\| \le \lambda (\|A\| +
  \|e\|+\|D_\N(A+e)\|);
$$
thus
$$
  \|T(A+e)\|<\|A\| \quad\text{unless}\quad \|A\|<\frac{2\lambda}{1-\lambda}
  \max_{e\in\N}\|e\|.
$$
\end{proof}

\begin{example}
We will illustrate the usage of a set of witnesses by the
following example. Let $\E=\ZZ[i]$ be the ring of Gaussian
integers, $P(x)=(1+i)x+(1+2i) \in \ZZ[i][x]$, $\R=\ZZ[i][x]/(P)$
and $\N=\{0,1,2,3,4\}$; $\N$ is a complete set of residues of
$\R/(X)$, because they are a constant polynomials and form a
complete set of residues of $\ZZ[i]/(1+2i)$. Since
$\abs{\frac{1+2i}{1+i}}=\sqrt{\frac{5}{2}}$ we see that $P$ is
expanding and therefore $(\R,X,\N)$ has the PEP. The Brunotte
basis of $\E[x]$ modulo $P$ is composed of the single element
$w_0=1+i$. As $\N \subset \ZZ[i]$ we know by Theorem~\ref{kern}
that $(\R,X,\N)$ has the FEP if and only if each element of
$\Lambda_P=(1+i)\ZZ[i]$ has a finite $X$-ary expansion. Moreover,
by the expanding property of $P$, Proposition~\ref{AllgBrun}
implies the existence of a finite set of witnesses $\V$ for
$\Lambda_P$; we claim that
\[
\V=\{0,\pm 1 \pm i, \pm 2, \pm(3-i), \pm(4-2i), \pm(2-2i)\}
\]
is such a set. Of course, the first condition of
Definition~\ref{DefSOW} is satisfied, since $1+i$ and
$-1+i=i(1+i)$ are additive generators of the Brunotte module
$(1+i)\ZZ[i]$, and they and their negatives are in $\V$. To show
the second condition we need to check that
\[
\{ T(v + e) \mid v\in\V, e \in \N \} \subseteq \V.
\]
This can be checked by direct calculation. For example, for
$v=3-i$ we get
\[
\{ T(3 - i + e) \mid e \in \N \} = \{-1+i, -4+2i\} \subseteq \V.
\]
The other elements can be treated likewise. It remains to check
that the orbit $(T^n(v))_{n\ge 0}$ contains zero for each $v \in
\V$. In Figure~\ref{f1} the action of $T$ on the elements of $\V$
is indicated by arrows.

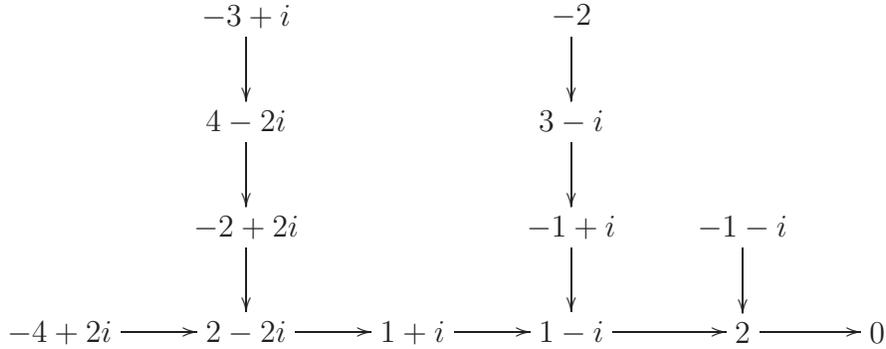
\begin{figure}[h]
\hskip 0cm \xymatrix{
&-3+i \ar[d]  &&-2\ar[d]&& \\
&4-2i \ar[d] &&3-i\ar[d]&& \\
&-2+2i\ar[d] &&-1+i\ar[d]&-1-i\ar[d]& \\
-4+2i\ar[r]&2-2i \ar[r]&1+i\ar[r]&1-i\ar[r]&2\ar[r]& 0 }
\caption{The action of $T$ in the set of witnesses $\V$\label{f1}}
\end{figure}

This shows that all orbits of $\V$ end in $0$. This shows that the
periodic set of the digit system contains just the loop at $0$. By
Lemma~\ref{lemma3}, the digit system $(\R,X,\N)$ has the FEP.
\end{example}

\end{section}

\bibliographystyle{siam}
\bibliography{nmv}

\end{document}